\newcommand{\half}{\frac{1}{2}}
\newcommand{\R}{\mathbb R}
\begin{document} 
\newtheorem{prop}{Proposition}[section]
\newtheorem{Def}{Definition}[section] \newtheorem{theorem}{Theorem}[section]
\newtheorem{lemma}{Lemma}[section] \newtheorem{Cor}{Corollary}[section]

\title[Dirac-Klein-Gordon]{\bf Local well-posedness of the two-dimensional Dirac-Klein-Gordon equations in Fourier-Lebesgue spaces}
\author[Hartmut Pecher]{
{\bf Hartmut Pecher}\\
Fakult\"at f\"ur Mathematik und Naturwissenschaften\\
Bergische Universit\"at Wuppertal\\
Gau{\ss}str.  20\\
42119 Wuppertal\\
Germany\\
e-mail {\tt pecher@uni-wuppertal.de}}
\date{}

\begin{abstract}
The local well-posedness problem is considered for the Dirac-Klein-Gordon system in two space dimensions for data in Fourier-Lebesgue spaces $\widehat{H}^{s,r}$ , where
$\|f\|_{\widehat{H}^{s,r}} = \| \langle \xi \rangle^s \widehat{f}\|_{L^{r'}}$ and  $r$ and $r'$ denote dual exponents. We lower the regularity assumptions on the data with respect to scaling improving the results of d'Ancona, Foschi and Selberg in the classical case $r=2$ . Crucial is the fact that the nonlinearities fulfill a null condition as detected by these authors.
\end{abstract}
\maketitle
\renewcommand{\thefootnote}{\fnsymbol{footnote}}
\footnotetext{\hspace{-1.5em}{\it 2000 Mathematics Subject Classification:} 
35Q40, 35L70 \\
{\it Key words and phrases:} Dirac-Klein-Gordon,  
local well-posedness, Fourier-Lebesgue spaces}
\normalsize 
\setcounter{section}{0}
\section{Introduction and main results}

Consider the Cauchy problem for the Dirac-Klein-Gordon 
equations in two space dimensions 
\begin{eqnarray}
\label{0.1}
i(\partial_t + \alpha \cdot \nabla) \psi + M \beta \psi & = & - \phi \beta \psi 
\\
\label{0.2}
(-\partial_t^2 + \Delta) \phi + m\phi & = & - \langle \beta \psi,\psi \rangle
\end{eqnarray}
with (large) initial data
\begin{equation}
\psi(0)  =  \psi_0 \,  , \, \phi(0)  =  \phi_0 \, , \, \partial_t 
\phi(0) = \phi_1 \, .
\label{0.3}
\end{equation}
Here $\psi$ is a two-spinor field, i.e. $\psi : \R^{1+2} \to {\mathcal C}^2$, 
and 
$\phi$ is a real-valued function, i.e. $\phi : \R^{1+2} \to \R$ , 
$m,M 
\in \R$ and $\nabla = (\partial_{x_1} , \partial_{x_2}) $ , $ \alpha \cdot 
\nabla = \alpha^1 \partial_{x_1} + \alpha^2 \partial_{x_2}$ . 
$\alpha^1,\alpha^2, \beta$ are hermitian ($ 2 \times 2$)-matrices satisfying 
$\beta^2 = 
(\alpha^1)^2 = (\alpha^2)^2 = I $ , $ \alpha^j \beta + \beta \alpha^j = 0, $  $ 
\alpha^j \alpha^k + \alpha^k \alpha^j = 2 \delta^{jk} I $ . \\
$\langle \cdot,\cdot \rangle $ denotes the ${\mathcal C}^2$ - scalar product. A 
particular representation is given by  $\alpha^1 = {0\;\,1 \choose 1\;\,0}$ , 
$\alpha^2 =  {0\,-i \choose i\;\,0}$ , $\beta = {1\;\,0\choose0 -1}$.\\
The Cauchy data are assumed to belong to Fourier-Lebesgue spaces:
$\psi_0 \in \widehat{H}^{s,r}$, $\phi_0 \in \widehat{H}^{l,r}$ , $\phi_1 \in \widehat{H}^{l-1,r}$ . Here $\widehat{H}^{s,r}$  , $1\le r < \infty$ , denotes the completion of ${\mathcal S}(\R^2)$ with respect to the norm $\|f\|_{\widehat{H}^{s,r}} = \| \langle \xi \rangle^s \widehat{f}\|_{L^{r'}}$ , where $r$ and $r'$ denote dual exponents and $\widehat{f}$ is the Fourier transform of $f$.

Following \cite{AFS1} it is possible to simplify the system (\ref{0.1}),(\ref{0.2}),(\ref{0.3}) by 
considering the projections onto the one-dimensional eigenspaces of the 
operator 
$-i \alpha \cdot \nabla$ belonging to the eigenvalues $ \pm |\xi|$. These 
projections are given by $\Pi_{\pm}(D)$, where  $ D = 
\frac{\nabla}{i} $ and $\Pi_{\pm}(\xi) = \frac{1}{2}(I 
\pm \frac{\xi}{|\xi|} \cdot \alpha) $. Then $ 
-i\alpha \cdot \nabla = |D| \Pi_+(D) - |D| \Pi_-(D) $ and $ \Pi_{\pm}(\xi) \beta
= \beta \Pi_{\mp}(\xi) $. Defining $ \psi_{\pm} := \Pi_{\pm}(D) \psi$ and 
splitting the function $\phi$ into the sum $\phi = \frac{1}{2}(\phi_+ + 
\phi_-)$, where $\phi_{\pm} := \phi \pm iA^{-1/2} \partial_t \phi $ , $ A:= 
-\Delta+1$ , the Dirac - Klein - Gordon system can be rewritten as
\begin{eqnarray}
\label{*}
(-i \partial_t \pm |D|)\psi_{\pm} & = & -M\beta \psi_{\mp} + \Pi_{\pm}(\phi 
\beta (\psi_+ + \psi_-)) \\
\nonumber
(i\partial_t \mp A^{1/2})\phi_{\pm} & = &\mp A^{-1/2} \langle \beta (\psi_+ + 
\psi_-), \psi+ + \psi_- \rangle \mp A^{-1/2} (m+1)(\phi_+ + \phi_-) . \\
\label {**}
\end{eqnarray}
The initial conditions are transformed into
\begin{equation}
\label{***}
\psi_{\pm}(0) = \Pi_{\pm}(D)\psi_0 \, , \,  \phi_{\pm}(0) = \phi_0 \pm i 
A^{-1/2} \phi_1
\end{equation}

The aim is to minimize the regularity of the data so that local well-posedness holds. Persistence of higher regularity is then a consequence of the fact that the results are obtained by a Picard iteration.

The decisive detection by d'Ancona, Foschi and Selberg \cite{AFS1} was that both nonlinearities satisfy a null condition. This implies that the Cauchy problem in three space dimensions is locally well-posed in the classical case $r=2$ for data $(\psi_0,\phi_0,\phi_1) \in H^s \times H^l \times H^{l-1}$ , where $s>0$ , $l=s+\half$ . This is almost optimal with respect to scaling.

In the case $m=M=0$ the Dirac-Klein-Gordon system is invariant under the rescaling
$$\psi_{\lambda}(t,x)= \lambda^{\frac{3}{2}} \psi(\lambda t,\lambda x) \quad , \quad \phi_{\lambda}(t,x)= \lambda \psi(\lambda t,\lambda x) \,. . $$
Because in N space dimensions
\begin{align*}
\|\psi_{\lambda}(0,\cdot)\|_{\dot{\widehat{H}}^{s,r}} &= \lambda^{\frac{3}{2}} \|\psi(0,\lambda x)\|_{\dot{\widehat{H}}^{s,r}} \sim \lambda^{\frac{3}{2}+s-\frac{N}{r}} \|\psi(0,\cdot)\|_{\dot{\widehat{H}}^{s,r}} \, ,\\
\|\psi_{\lambda}(0,\cdot)\|_{\dot{\widehat{H}}^{s+\half,r}} &= \lambda^{\frac{3}{2}} \|\phi(0,\lambda x)\|_{\dot{\widehat{H}}^{s+\half,r}} \sim \lambda^{\frac{3}{2}+s-\frac{N}{r}} \|\psi(0,\cdot)\|_{\dot{\widehat{H}}^{s+\half,r}}
\end{align*}
the scale-invariant space is
$$(\psi_0,\phi_0,\phi_1) \in \dot{\widehat{H}}^{\frac{N}{r}-\frac{3}{2},r} \times
\dot{\widehat{H}}^{\frac{N}{r}-1,r} \times \dot{\widehat{H}}^{\frac{N}{r}-2,r} \, , $$
where $\|f\|_{\dot{\widehat{H}}^{s,r}} = \| |\xi|^s \widehat{f}\|_{L^{r'}}$ 
Thus in the two-dimensional case the critical spaces are
$$(\psi_0,\phi_0,\phi_1) \in H^{-\half} \times L^2 \times H^{-1} \quad {\mbox for} \, r=2 $$
and $$(\psi_0,\phi_0,\phi_1) \in \widehat{H}^{\half-,1+} \times \widehat{H}^{1+,1+} \times \widehat{H}^{0+,1+} \quad {\mbox for} \, r=1+ \, .$$

We remark that $\dot{\widehat{H}}^{s,r} \sim \dot{H}^{\sigma,2}$ , where $\sigma=s+N(\half-\frac{1}{r})$ in terms of scaling, because $\|\psi_0(\lambda x)\|_{\dot{\widehat{H}}^{s,r}} \sim \lambda^{s-\frac{N}{r}} \|\psi_0\|_{\dot{\widehat{H}}^{s,r}}$ . 

In two space dimensions local well-posedness in the classical case $r=2$ was proven by d'Ancona, Foschi and Selberg \cite{AFS} for $ s > -\frac{1}{5}$ and $\max(\frac{1}{4}-\frac{s}{2},\frac{1}{4}+\frac{s}{2},s) < l < \min(\frac{3}{4}+2s,\frac{3}{4}+\frac{3s}{2},1+s)$ , especially for $(s,l)=(-\frac{1}{5}+,\frac{7}{20})$ and $(s,l)=(0,\frac{1}{4}+)$. Global well-posedness was obtained by Gr\"unrock and the author \cite{GP} for $r=2$ and $s\ge 0$, $l=s+\half$ , using the charge conservation law $\|\psi(t)\|_{L^2} = const$ . This means that there is still a gap concerning LWP between the known results and the minimal regularity predicted by scaling, namely $(s,l)=(-\half,0)$ 
leaving open the problem what happens for $-\half < s < - \frac{1}{5}$ and $0 < l < \frac{7}{20}$ or else $-\half < s <0$ and $0<l\le \frac{1}{4}$. We want to approach this problem by leaving the $L^2$-based data and study the local well-posedness problem for data in $\widehat{H}^{s,r}$-spaces for $1<r<2$ , especially for $r=1+$ . The critical spaces are
$(\psi_0,\phi_0,\phi_1) \in \widehat{H}^{\frac{2}{r}-\frac{3}{2},r} \times
\widehat{H}^{\frac{2}{r}-1,r} \times \widehat{H}^{\frac{2}{r}-2,r} \, , $
i.e. $(\psi_0,\phi_0,\phi_1) \in \in \widehat{H}^{-\half+,r} \times
\widehat{H}^{0+,r} \times \widehat{H}^{-1+,r}$ for $r=1+$ .

Our main Theorem \ref{Thm.0.1} shows that especially for $r=1+$ we may assume $(\psi_0,\phi_0,\phi_1) \in \widehat{H}^{\frac{5}{8}+,r} \times \widehat{H}^{\frac{5}{4}+,r} \times \widehat{H}^{\frac{1}{4}+,r}$ leaving open the interval $\half < s <\frac{5}{8}$ for the spinor and $1<l\le \frac{5}{4}$ . As remarked above in terms of scaling $H^{\frac{5}{8}+,1+} \sim H^{-\frac{3}{8}+}$ and  $H^{\frac{5}{4}+,1+} \sim H^{\frac{1}{4}+}$ . Thus in this sense the gap for the spinor significantly shrinks to $-\half < s \le -\frac{3}{8}$ from $-\half < s \le -\frac{1}{5}$  in the pure $L^2$-case.

This gap phenomenon especially for the low dimensional case $N=2$ also appears for other types of nonlinear wave equations with quadratic nonlinearities. In the three-dimensonal case Gr\"urock \cite{G1} proved for quadratic derivative nonlinear wave equations like $\square u = (\partial u)^2$ an almost optimal well-posedness result in the sense of scaling as $r \to 1$ . This problem was considered in the two-dimensional case by Grigoryan-Tanguay \cite{GT}. For $r=2$ the critical exponent is $s=1$ . The authors prove by use of Strichartz type estimates that $s > \frac{7}{4}$ is sufficient for LWP. For $1 < r < 2$ these authors proved LWP for $s > 1+\frac{3}{2r}$, thus $s > \frac{5}{2}$ for $r=1+$ , which scales like $H^{\frac{3}{2}+}$ , half a derivative away from the critical exponent.

If however a null condition is satisfied for a system of the form $\square u = Q(u,u)$, where $Q$ is one of the null forms of Klainerman, then this gap could be closed by Grigoryan-Nahmod \cite{GN}, who established LWP for $s > \half+\frac{3}{2r}$ , which for $r=1+$ scales like $H^{1+}$  , as desired.

In the classical case $r=2$ it is by now standard to reduce LWP for semilinear wave equations to estimates for the nonlinearities in Bourgain-Klainerman-Machedon spaces $X^{s,b}$ . Gr\"unrock \cite{G} proved that a similar method also works for $1<r<2$ . He also obtained the necessary bilinear estimates for the derivative wave equation by use of the calculations of Foschi-Klainerman \cite{FK}. Later this approach was also used by \cite{GN} and by the author \cite{P} for the Chern-Simons-Higgs and the Chern-Simons-Dirac equations. Using the fact that the nonlinear terms in the Dirac-Klein-Gordon system fulfill a null condition, as was shown by \cite{AFS}, we now combine the estimates in \cite{FK} and a bilinear estimate by \cite{GT}.\\[0.2em]

We now formulate the main result for the DKG system.
\begin{theorem}
\label{Thm.0.1}
Let $1 < r \le 2$ , $\delta > 0$ and $s = s_0+\delta$, $l = l_0+\delta$ . Here $(s_0,l_0) = (\frac{33}{20r} - \frac{41}{40}, \frac{9}{5r} - \frac{11}{20})$ (minimal $s$) and $(s_0,l_0)= (\frac{5}{4r}-\frac{5}{8},\frac{2}{r}-\frac{3}{4})$ (minimal $l$) are admissible. Assume
$$ \psi_0 \in \widehat{H}^{s,r}(\R^2) \, , \, \phi_0 \in \widehat{H}^{l,r}(\R^2) \, , \, \phi_1 \in \widehat{H}^{l-1,r}(\R^2) \, .$$
Then there exists $ T > 0$ , $T=T(\|\psi_0\|_{\widehat{H}^{s,r}} , \|\phi_0\|_{\widehat{H}^{l,r}} , \|\phi_1\|_{\widehat{H}^{l-1,r}})$ such that the DKG system (\ref{0.1}),(\ref{0.2}),(\ref{0.3}) has a unique solution
$$ \psi \in X^r_{s,b,+}[0,T] + X^r_{s,b,-}[0,T] \, , \, \phi \in X^r_{l,b,+}[0,T] + X^r_{l,b,-}[0,T] ,$$
$$ \partial_t \phi \in X^r_{l-1,b,+}[0,T] + X^r_{l-1,b,-}[0,T] , $$
where $b= \frac{1}{r}+$ . This solution satisfies
$$ \psi \in C^0([0,T],\widehat{H}^{s,r}) \, ,  \phi \in C^0([0,T],\widehat{H}^{l,r}) \, ,  \partial_t \phi \in C^0([0,T],\widehat{H}^{l-1,r}) \, . $$
\end{theorem}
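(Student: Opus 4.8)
The plan is to follow the by-now standard contraction-mapping scheme for semilinear wave-type equations, but carried out in the Fourier-Lebesgue-based Bourgain spaces $X^r_{s,b,\pm}$ adapted to the half-wave operators $-i\partial_t \pm |D|$ and the Klein-Gordon operator $i\partial_t \mp A^{1/2}$. Concretely, I would work with the rewritten first-order system \eqref{*}, \eqref{**} together with the transformed data \eqref{***}, and set up the iteration for the quadruple $(\psi_+,\psi_-,\phi_+,\phi_-)$ in $\prod X^r_{s,b,\pm} \times \prod X^r_{l,b,\pm}$ with $b = \frac1r+$. By the standard linear theory for these spaces (energy inequality, the $X^{s,b}$-to-$C^0\widehat H^{s,r}$ embedding valid for $b>\frac1r$, and the time-localization/gain-of-a-power-of-$T$ lemma), everything reduces to proving the trilinear/bilinear estimates for the nonlinear terms, namely
\begin{itemize}
\item[(i)] the Dirac nonlinearity $\Pi_\pm(\phi\,\beta\,\psi_\pm')$ maps $\widehat H^{l,r}\times X^r_{s,b,\pm'} \to X^r_{s,b-1+\varepsilon,\pm}$-type bounds, i.e. a product estimate $\|\Pi_\pm(\phi\,\beta\,\psi)\|_{X^r_{s,b-1+,\pm}} \lesssim \|\phi\|_{X^r_{l,b,\pm_1}}\|\psi\|_{X^r_{s,b,\pm_2}}$;
\item[(ii)] the Klein-Gordon nonlinearity $A^{-1/2}\langle\beta\psi_{\pm_1},\psi_{\pm_2}\rangle$ obeys $\|A^{-1/2}\langle\beta\psi,\tilde\psi\rangle\|_{X^r_{l,b-1+,\pm}} \lesssim \|\psi\|_{X^r_{s,b,\pm_1}}\|\tilde\psi\|_{X^r_{s,b,\pm_2}}$;
\item[(iii)] the harmless lower-order linear terms $M\beta\psi_\mp$ and $A^{-1/2}(m+1)(\phi_++\phi_-)$, which are trivially controlled since they gain regularity or cost nothing after time localization.
\end{itemize}

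The heart of the matter is exploiting the null structure found in \cite{AFS}. The bilinear forms $\langle\beta\psi_{\pm_1},\psi_{\pm_2}\rangle$ and the projected product $\Pi_\pm(\phi\,\beta\,\psi_{\pm'})$ are not generic products: when one decomposes into the signs $\pm$, the symbol of the interaction carries an angular factor comparable to $\angle(\xi_1,\pm_1\pm_2\,\xi_2)$ (or the analogous angle between the two characteristic cones), which vanishes when the spatial frequencies are parallel. I would make this explicit by writing the projectors $\Pi_\pm(\xi) = \frac12(I \pm \frac{\xi}{|\xi|}\cdot\alpha)$ and using the algebraic identities for $\alpha^j,\beta$ to extract, in each sign combination, a factor bounded by the angle between the relevant frequency vectors; this is exactly the null-form gain recorded in \cite[Lemma ...]{AFS}. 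Having isolated this null factor, the estimates (i), (ii) become quantitative bilinear convolution estimates in $\widehat H^{s,r}$-weighted $L^{r'}$ over the product of the two forward/backward cones (thickened by the modulation weights $\langle \tau \mp |\xi|\rangle^{b}$ to the power $r'$). For $r=2$ these are precisely the $L^2$-cone bilinear estimates of Foschi-Klainerman \cite{FK}; for $1<r<2$ I would run the analogue of Grünrock's transference argument \cite{G}, which reduces the $X^{s,b}_r$-estimate to a family of weighted $L^{r'}$ estimates on dyadic pieces that follow from \cite{FK} together with the additional bilinear cone estimate borrowed from Grigoryan-Tanguay \cite{GT}. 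The loss of $1/r$ (rather than $1/2$) in the modulation exponents and the $L^{r'}$ rather than $L^2$ summation is what forces the precise numerology $(s_0,l_0)$ in the two stated corners.

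The main obstacle I expect is bookkeeping the dyadic case analysis so that all regions — high-high $\to$ low, high-low, and especially the resonant/parallel region where the null factor is essential — close simultaneously with a strictly positive power of $T$ and for the two extremal pairs $(s_0,l_0)$ claimed in the statement. In the parallel region one must play the null angle against the smallness of the output modulation, and getting the exponents to balance forces the choice $b=\frac1r+$ and the precise lines $s_0 = \frac{33}{20r}-\frac{41}{40}$, $l_0=\frac9{5r}-\frac{11}{20}$ (optimizing $s$) versus $s_0=\frac5{4r}-\frac58$, $l_0=\frac2r-\frac34$ (optimizing $l$); verifying admissibility means checking a finite but tedious list of linear inequalities in $(s,l,1/r)$ coming from each frequency interaction. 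Once the multilinear estimates are in hand, the contraction mapping on a small ball in the $X^r$-spaces on $[0,T]$ yields existence and uniqueness; persistence of regularity and the $C^0$-in-time statement follow from the iteration and the embedding $X^r_{s,b,\pm}[0,T] \hookrightarrow C^0([0,T],\widehat H^{s,r})$ for $b>\frac1r$, exactly as in \cite{G, GN, P}. Undoing the reduction $\psi=\psi_++\psi_-$, $\phi=\frac12(\phi_++\phi_-)$, $\partial_t\phi = \frac{1}{2i}A^{1/2}(\phi_+-\phi_-)$ then transfers the conclusion back to the original system \eqref{0.1}--\eqref{0.3}.
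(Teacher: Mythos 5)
Your overall strategy coincides with the paper's: rewrite the system in the split form \eqref{*}--\eqref{***}, invoke the general Fourier--Lebesgue local well-posedness theorem of Gr\"unrock (Theorem \ref{Theorem0.3}) to reduce everything to the two bilinear estimates \eqref{11} and \eqref{12} with $b=\frac1r+$ and $b'=b-1+$, extract the null factor via the projector identity $\Pi_{\pm_2}(\eta-\xi)\beta\Pi_{\pm_1}(\eta)=O(\angle(\pm_1\eta,\pm_2(\eta-\xi)))$ of \cite{AFS}, and then control the resulting angular-weighted convolutions on the cone by the Foschi--Klainerman integrals together with the Grigoryan--Tanguay bilinear estimate, transferred to $X^r_{s,b,\pm}$ by the analogue of the transfer principle. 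The treatment of the lower-order terms and the passage back to $(\psi,\phi,\partial_t\phi)$ with the $C^0$-in-time statement are also as in the paper.

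The one substantive omission concerns how the stated exponents $(s_0,l_0)=(\frac{33}{20r}-\frac{41}{40},\frac{9}{5r}-\frac{11}{20})$ and $(\frac{5}{4r}-\frac58,\frac{2}{r}-\frac34)$ for \emph{general} $1<r\le 2$ are actually obtained. You propose to verify the bilinear estimates directly for each $r$ by a dyadic case analysis and a ``finite but tedious list of linear inequalities in $(s,l,1/r)$.'' The paper does not do this, and it is doubtful that a direct computation would produce those particular lines: they are affine in $1/r$ precisely because the paper proves the estimates only at the endpoint $r=1+$ (Proposition \ref{Prop.2.2}, with thresholds $s\ge\frac{5}{8r}$, $l>\frac12+\frac{3}{4r}$, where the bottleneck is the single estimate \eqref{5'}) and then performs \emph{bilinear complex interpolation} of the $X^r_{s,b,\pm}$ scales against the known $r=2$ result of d'Ancona--Foschi--Selberg, i.e.\ against the pairs $(-\frac15+,\frac{7}{20})$ and $(0,\frac14+)$ (Propositions \ref{Prop.2.2'} and \ref{Prop.2.2''}). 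Without this interpolation step your argument justifies at best the $r=1+$ endpoint, and leaves the claimed numerology for intermediate $r$ unexplained; you should either add the interpolation argument or replace the stated lines by whatever thresholds your direct case analysis actually yields.
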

The spaces $X^r_{s,b,\pm}$ are generalizations of the Bourgain-Klainerman-Machedon spaces $X^{s,b}$ (for $r=2$). We define $X^r_{s,b\pm}$ as the completion of ${\mathcal S}(\R^{1+2})$ with respect to the norm
$$ \|\phi\|_{X^r_{s,b\pm}} := \| \langle \xi \rangle^s \langle \tau \pm |\xi| \rangle^b \tilde{\phi}(\tau,\xi)\|_{L^{r'}_{\tau \xi}} $$
for $1 \le r \le 2$, $\frac{1}{r} + \frac{1}{r'}=1$ , where $\, \tilde{} \,$ denotes the Fourier transform with respect to space and time. \\
{\bf Remark 1:} By Theorem \ref{Theorem0.3} the solution depends continuously on the data. \\
{\bf Remark 2:} We recover the case $r=2$ with $(s_0,l_0)=(-\frac{1}{5}+,\frac{7}{20}+)$ or $(s_0,l_0)= (0,\frac{1}{4}+)$ from \cite{AFS} and the pair $(s_0,l_0)=(\frac{5}{8}+,\frac{5}{4}+)$ for $r=1+$ .\\
{\bf Remark 3:} By interpolation of the case $r=1+$ with the whole range of pairs $(s,l)$ for $r=2$  from \cite{AFS} (cf. Prop. \ref{Prop.2.2'} below) one obtains further admissible pairs $(s_0,l_0)$ for $1 < r < 2$ . We omit the details.\\[0.5em]

Using the following general local well-posedness theorem (cf. \cite{G}, Theorem 1) we reduce the proof of Theorem \ref{Thm.0.1} to bilinear estimates for the nonlinearities.

\begin{theorem}
\label{Theorem0.3}
Let $N(u)$ be a nonlinear function of degree $\alpha > 0$.
Assume that for given $s \in \R$, $1 < r < \infty$ there exist $ b > \frac{1}{r}$ and $b'\in (b-1,0)$ such that the estimates
$$ \|N(u)\|_{X^r_{s,b',\pm}} \le c \|u\|^{\alpha}_{X^r_{s,b,\pm}} $$
and 
$$\|N(u)-N(v)\|_{X^r_{s,b',\pm}} \le c (\|u\|^{\alpha-1}_{X^r_{s,b,\pm}} + \|v\|^{\alpha-1}_{X^r_{s,b,\pm}}) \|u-v\|_{X^r_{s,b,\pm}} $$
are valid. Then there exist $T=T(\|u_0\|_{\hat{H}^{s,r}})>0$ and a unique solution $u \in X^r_{s,b,\pm}[0,T]$ of the Cauchy problem
$$ \partial_t u \pm iDu = N(u) \quad , \quad u(0) = u_0 \in \hat{H}^{s,r} \, , $$
where $D$ is the operator with Fourier symbol $|\xi|$. This solution is persistent and the mapping data upon solution $u_0 \mapsto u$ , $\hat{H}^{s,r} \to X^r_{s,b,\pm}[0,T_0]$ is locally Lipschitz continuous for any $T_0 < T$.
\end{theorem}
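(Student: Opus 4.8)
The plan is to run the standard contraction mapping argument in the restriction spaces $X^r_{s,b,\pm}[0,T]$, replacing the Plancherel/Fourier steps of the classical $X^{s,b}$-theory by Hausdorff--Young and H\"older in the time-frequency variable $\tau$. Write $U_\pm(t)=e^{\mp itD}$ for the free propagators. By Duhamel's formula, solving $\partial_t u \pm iDu = N(u)$, $u(0)=u_0$, on $[0,T]$ is equivalent to finding a fixed point of
$$\Phi(u)(t) := \psi(t)\, U_\pm(t) u_0 + \psi_T(t)\int_0^t U_\pm(t-t')\, N(u)(t')\, dt' ,$$
where $\psi\in C_0^\infty(\R)$ equals $1$ near the origin, $\psi_T(\cdot)=\psi(\cdot/T)$ and $0<T\le 1$; on $[0,T]$ the map $\Phi(u)$ agrees with the Duhamel solution, so its fixed point solves the Cauchy problem.

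The argument rests on three linear estimates, all of which reduce to one-dimensional $L^r_\tau$--$L^{r'}_\tau$ inequalities. First, the homogeneous bound $\|\psi\, U_\pm(\cdot)u_0\|_{X^r_{s,b,\pm}} \lesssim \|u_0\|_{\widehat{H}^{s,r}}$: the space-time Fourier transform of $\psi\, U_\pm(\cdot)u_0$ equals $\widehat\psi(\tau\pm|\xi|)\,\widehat{u_0}(\xi)$, and $\langle\cdot\rangle^b\widehat\psi\in L^{r'}(\R)$ because $\widehat\psi$ is Schwartz. Second, the embedding $X^r_{s,b,\pm}\hookrightarrow C^0(\R,\widehat{H}^{s,r})$ for $b>\frac1r$: by H\"older in $\tau$ one bounds $\|\widetilde u(\cdot,\xi)\|_{L^1_\tau}$ by $\|\langle\cdot\rangle^{-b}\|_{L^r_\tau}\,\|\langle\tau\pm|\xi|\rangle^b\widetilde u(\cdot,\xi)\|_{L^{r'}_\tau}$, and $\langle\cdot\rangle^{-b}\in L^r_\tau$ precisely when $b>\frac1r$; continuity in $t$ then follows by density of $\mathcal S(\R^{1+2})$. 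Third --- and this is the technical heart --- the inhomogeneous estimate
$$\Bigl\|\psi_T\int_0^t U_\pm(t-t')\,F(t')\,dt'\Bigr\|_{X^r_{s,b,\pm}} \lesssim T^{1-b+b'}\,\|F\|_{X^r_{s,b',\pm}} ,$$
which gains the positive power $T^{1-b+b'}$ exactly because $b'>b-1$. The delicate point here is the bookkeeping near the characteristic set $\tau\pm|\xi|=0$: one splits $F$ according to whether $|\tau\pm|\xi||\lesssim T^{-1}$ or not, handles the resonant piece via a Taylor expansion of $\psi_T$ and the non-resonant piece by writing the multiplier $(i(\tau\pm|\xi|))^{-1}$ out explicitly, and in both cases one uses only $b>\frac1r$ and $b'\in(b-1,0)$. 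I expect this to be the one genuinely technical step; everything else is bookkeeping.

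Granting these, the rest is routine. Combining the inhomogeneous estimate with the hypothesis $\|N(u)\|_{X^r_{s,b',\pm}}\le c\|u\|^\alpha_{X^r_{s,b,\pm}}$ gives, on $[0,T]$,
$$\|\Phi(u)\|_{X^r_{s,b,\pm}} \le c_1\|u_0\|_{\widehat{H}^{s,r}} + c_2\, T^{1-b+b'}\,\|u\|^\alpha_{X^r_{s,b,\pm}} ,$$
and, from the Lipschitz hypothesis, $\|\Phi(u)-\Phi(v)\|_{X^r_{s,b,\pm}}\le c_2 T^{1-b+b'}\bigl(\|u\|^{\alpha-1}_{X^r_{s,b,\pm}}+\|v\|^{\alpha-1}_{X^r_{s,b,\pm}}\bigr)\|u-v\|_{X^r_{s,b,\pm}}$. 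Put $R:=2c_1\|u_0\|_{\widehat{H}^{s,r}}$ and choose $T=T(\|u_0\|_{\widehat{H}^{s,r}})$ so small that $2c_2 T^{1-b+b'}R^{\alpha-1}\le\half$; then $\Phi$ maps the closed ball of radius $R$ in $X^r_{s,b,\pm}[0,T]$ into itself and is a contraction there, so the Banach fixed point theorem produces the unique solution in that ball. Uniqueness in the full space $X^r_{s,b,\pm}[0,T]$ follows by the usual continuity-in-time bootstrap, and persistence by re-running the iteration with $u_0$ replaced by a smoother datum (the existence time depending only on the lower norm). Finally, applying the Lipschitz estimate to two solutions with data $u_0,v_0$ in a fixed ball yields $\|u-v\|_{X^r_{s,b,\pm}[0,T_0]}\le 2c_1\|u_0-v_0\|_{\widehat{H}^{s,r}}$ for every $T_0<T$, which is the claimed local Lipschitz dependence of the data-to-solution map.
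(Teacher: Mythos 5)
Your sketch is essentially the argument of the cited source: the paper itself gives no proof of this theorem but quotes it from Gr\"unrock \cite{G} (Theorem 1), whose proof is exactly the contraction mapping scheme you describe --- cut-off free evolution plus truncated Duhamel term, the three linear estimates reduced to one-dimensional H\"older/Hausdorff--Young bounds in $\tau$, and the gain $T^{1-b+b'}$ from $b'>b-1$ driving the fixed point iteration. Your outline is correct and matches that route, so there is nothing to add beyond noting that the genuinely technical inhomogeneous estimate you flag is worked out in detail in \cite{G}.
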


\section{Bilinear estimates}
We start by collecting some fundamental properties of the solution spaces. We rely on \cite{G}. The spaces $X^r_{s,b,\pm} $ with norm  $$ \|\phi\|_{X^r_{s,b\pm}} := \| \langle \xi \rangle^s \langle \tau \pm |\xi| \rangle^b \tilde{\phi}(\tau,\xi)\|_{L^{r'}_{\tau \xi}} $$ for  $1<r<\infty$ are Banach spaces with ${\mathcal S}$ as a dense subspace. The dual space is $X^{r'}_{-s,-b,\pm}$ , where $\frac{1}{r} + \frac{1}{r'} = 1$. The complex interpolation space is given by
$$(X^{r_0}_{s_0,b_0,\pm} , X^{r_1}_{s_1,b_1,\pm})_{[\theta]} = X^r_{s,b,\pm} \, , $$
where $s=(1-\theta)s_0+\theta s_1$, $\frac{1}{r} = \frac{1-\theta}{r_0} + \frac{\theta}{r_1}$ , $b=(1-\theta)b_0 + \theta b_1$ . Similar properties has the space $X^r_{s,b}$ , defined by its norm 
$$ \|\phi\|_{X^r_{s,b}} := \| \langle \xi \rangle^s \langle |\tau| - |\xi| \rangle^b \tilde{\phi}(\tau,\xi)\|_{L^{r'}_{\tau \xi}} \, . $$ 
We also define
$$ X^r_{s,b,\pm}[0,T] = \{ u = U_{|[0,T]\times \R^2} \, : \, U \in X^r_{s,b,\pm} \} $$
with
$$ \|u\|_{X^r_{s,b,\pm}[0,T]} := \inf \{ \|U\|_{X^r_{s,b,\pm}} : U_{|[0,T]\times \R^2} = u \} $$
and similarly $X^r_{s,b}[0,T]$ . \\
If $u=u_++u_-$, where $u_{\pm} \in X^r_{s,b,\pm} [0,T]$ , then $u \in C^0([0,T],\hat{H}^{s,r})$ , if $b > \frac{1}{r}$ .\\[0.2em]

The "transfer principle" in the following proposition, which is well-known in the case $r=2$, also holds for general $1<r<\infty$ (cf. \cite{GN}, Prop. A.2 or \cite{G}, Lemma 1). We denote $ \|u\|_{\hat{L}^p_t(\hat{L}^q_x)} := \|\tilde{u}\|_{L^{p'}_{\tau} (L^{q'}_{\xi})}$ .
\begin{prop}
\label{Prop.0.1}
Let $1 \le p,q \le \infty$ .
Assume that $T$ is a bilinear operator which fulfills
$$ \|T(e^{\pm_1 itD} f_1, e^{\pm_2itD} f_2)\|_{\hat{L}^p_t({\hat L}^q_x)} \lesssim \|f_1\|_{\hat{H}^{s_1,r}} \|f_2\|_{\hat{H}^{s_2,r}} \, .$$
Then for $b > \frac{1}{r}$ the following estimate holds:
$$ \|T(u_1,u_2)\|_{\hat{L}^p_t(\hat{L}^q_x)} \lesssim \|u_1\|_{X^r_{s_1,b,\pm_1}}  \|u_2\|_{X^r_{s_2,b,\pm_2}} \, . $$
\end{prop}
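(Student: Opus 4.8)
The plan is to run the classical ``transfer principle'' argument, which for $1<r<\infty$ works verbatim as in the case $r=2$ (cf. \cite{G}, Lemma~1). First I would reduce to Schwartz data: since ${\mathcal S}(\R^{1+2})$ is dense in $X^r_{s,b,\pm}$ and both sides of the asserted inequality are continuous, it is enough to treat Schwartz $u_1,u_2$, for which all of the integrals below converge absolutely and the bilinear operator $T$ may be interchanged with them.

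The key step is to decompose each $u_j$ along its modulation variable. For $j=1,2$ set $\sigma_j:=\tau\pm_j|\xi|$; Fourier inversion in $(\tau,\xi)$ together with the substitution $\tau=\sigma_j\mp_j|\xi|$ (for fixed $\xi$) yields the representation
$$ u_j(t,\cdot)=\int_{\R}e^{it\sigma_j}\,\bigl(e^{\mp_j itD}g_{j,\sigma_j}\bigr)(\cdot)\,d\sigma_j \, ,\qquad \widehat{g_{j,\sigma_j}}(\xi):=\widetilde{u_j}\bigl(\sigma_j\mp_j|\xi|,\xi\bigr) \, , $$
so $u_j$ is an integral superposition of free half-wave solutions, modulated in time by $e^{it\sigma_j}$. (With the sign convention of this paper the propagator carrying the modulation weight $\langle\tau\pm_j|\xi|\rangle$ is $e^{\mp_j itD}$; since the hypothesis is assumed for every choice of the signs, it is applied here in the corresponding form.) Using bilinearity of $T$ and $e^{it(\sigma_1+\sigma_2)}=e^{it\sigma_1}e^{it\sigma_2}$ I would write
$$ T(u_1,u_2)(t,\cdot)=\iint_{\R^2}e^{it(\sigma_1+\sigma_2)}\,T\bigl(e^{\mp_1 itD}g_{1,\sigma_1},e^{\mp_2 itD}g_{2,\sigma_2}\bigr)(\cdot)\,d\sigma_1\,d\sigma_2 \, . $$
Because $1\le p,q\le\infty$, the functional $\|v\|_{\hat L^p_t(\hat L^q_x)}=\|\widetilde v\|_{L^{p'}_\tau(L^{q'}_\xi)}$ is a genuine norm, and it is unchanged by the time-modulation $e^{it(\sigma_1+\sigma_2)}$, which merely translates the $\tau$-variable. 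Hence Minkowski's integral inequality gives
$$ \|T(u_1,u_2)\|_{\hat L^p_t(\hat L^q_x)}\le\iint_{\R^2}\bigl\|T\bigl(e^{\mp_1 itD}g_{1,\sigma_1},e^{\mp_2 itD}g_{2,\sigma_2}\bigr)\bigr\|_{\hat L^p_t(\hat L^q_x)}\,d\sigma_1\,d\sigma_2 \, . $$

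Next I would invoke the hypothesis to bound the integrand by $\|g_{1,\sigma_1}\|_{\hat H^{s_1,r}}\|g_{2,\sigma_2}\|_{\hat H^{s_2,r}}$, so that the double integral factors and only $\int_{\R}\|g_{j,\sigma}\|_{\hat H^{s_j,r}}\,d\sigma$ remains to be controlled for $j=1,2$. Inserting $1=\langle\sigma\rangle^{-b}\langle\sigma\rangle^{b}$ and applying Hölder in $\sigma$ with exponents $r$ and $r'$,
$$ \int_{\R}\|g_{j,\sigma}\|_{\hat H^{s_j,r}}\,d\sigma\le\|\langle\sigma\rangle^{-b}\|_{L^r_\sigma}\,\bigl\|\langle\sigma\rangle^{b}\langle\xi\rangle^{s_j}\widetilde{u_j}\bigl(\sigma\mp_j|\xi|,\xi\bigr)\bigr\|_{L^{r'}_{\sigma\xi}} \, ; $$
by Fubini and the change of variables $\tau=\sigma\mp_j|\xi|$ (so that $\langle\sigma\rangle=\langle\tau\pm_j|\xi|\rangle$) the last factor equals $\|u_j\|_{X^r_{s_j,b,\pm_j}}$, while $\|\langle\sigma\rangle^{-b}\|_{L^r_\sigma}<\infty$ precisely because $b>\tfrac1r$, i.e. $br>1$. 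Multiplying the three estimates out yields the desired bound $\|T(u_1,u_2)\|_{\hat L^p_t(\hat L^q_x)}\lesssim\|u_1\|_{X^r_{s_1,b,\pm_1}}\|u_2\|_{X^r_{s_2,b,\pm_2}}$.

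I do not expect a genuine obstacle here — this is the familiar argument — but two places are exactly where the hypotheses are used: the range $1\le p,q\le\infty$ makes $\|\cdot\|_{\hat L^p_t(\hat L^q_x)}$ subadditive, so that Minkowski's inequality is available, and the integrability of $\langle\sigma\rangle^{-b}$ over $\R$ in the final Hölder step is where the condition $b>\tfrac1r$ is needed (for $r=2$ this is the usual $b>\tfrac12$). The only mildly technical point is the preliminary reduction to Schwartz functions together with the justification that $T$ commutes with the superposition integrals, which follows from the density of ${\mathcal S}$ in $X^r_{s,b,\pm}$ and the continuity built into the hypothesis.
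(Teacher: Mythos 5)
Your argument is correct and is exactly the standard transfer-principle proof: the paper gives no proof of its own but cites \cite{GN}, Prop.~A.2 and \cite{G}, Lemma~1, which run precisely this superposition-plus-Minkowski-plus-H\"older argument, with $b>\frac1r$ entering through $\|\langle\sigma\rangle^{-b}\|_{L^r_\sigma}<\infty$. Your remark about the sign convention (the propagator adapted to $X^r_{s,b,\pm}$ being $e^{\mp itD}$) correctly resolves the only notational ambiguity in the statement.
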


At first we are primarily interested in the case $r=1+$ . Thereafter we obtain the general case $1 < r \le 2$ by bilinear interpolation with the known results for the case $r=2$ .

\begin{prop}
\label{Prop.2.2}
Let $r=1+$ , $l \ge s \ge \frac{5}{8r}$ , $ \half + \frac{3}{4r} < l \le 1 + \frac{1}{4r} $ and $ b > \frac{1}{r}$ .The following estimates apply:
\begin{align}
\label{11}
\| \langle \beta \Pi_{\pm_1}(D) \psi , \Pi_{\pm_2}(D) \psi' \rangle \|_{X^r_{l-1,b-1+}} & \lesssim \|\psi\|_{X^r_{s,b,\pm_1}} \|\psi'\|_{X^r_{s,b,\pm_2}} \, , \\
\label{12}
\| \Pi_{\pm_2}(D)( \phi \beta \Pi_{\pm_1} \psi)\|_{X^r_{s,b-1+,\pm_2}} & \lesssim \|\phi\|_{X^r_{l,b}} \|\psi\|_{X^r_{s,b,\pm_1}} \, .
\end{align}
\end{prop}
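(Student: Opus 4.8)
The plan is to reduce both estimates to $L^{r'}_{\tau\xi}$-convolution estimates and then exploit the null structure detected by d'Ancona–Foschi–Selberg. For \eqref{11}, the key point is that the bilinear form $\langle \beta \Pi_{\pm_1}(D)\psi, \Pi_{\pm_2}(D)\psi'\rangle$ carries an angular factor: writing things out, one has a pointwise bound of the symbol by (roughly) $\theta(\pm_1\xi_1, \pm_2\xi_2)$, the angle between the two input frequencies, and this angle is controlled by the three modulation weights $\langle \tau\pm|\xi|\rangle$, $\langle\tau_1\pm_1|\xi_1|\rangle$, $\langle\tau_2\pm_2|\xi_2|\rangle$ via the elementary inequality relating the angle to the sum of the hyperbolic weights divided by $\min(|\xi_1|,|\xi_2|)$ (the standard null-form estimate, valid in the $\pm\pm$ and $\pm\mp$ cases separately). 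After inserting this gain, the remaining estimate is a weighted convolution inequality in $L^{r'}_{\tau\xi}$, which by duality and the transfer principle (Proposition \ref{Prop.0.1}) follows from the $\hat L^p_t\hat L^q_x$ product estimates for free waves — here one uses the Fourier–Lebesgue Strichartz estimates and the bilinear estimate of Grigoryan–Tanguay \cite{GT} together with the Foschi–Klainerman \cite{FK} calculations, specialised to $r=1+$. The estimate \eqref{12} for the Dirac part is dual in nature: $\Pi_{\pm_2}(D)(\phi\beta\Pi_{\pm_1}(D)\psi)$ has a symbol bounded by the angle $\theta(\pm_2\xi,\pm_1\xi_1)$ between the output spinor frequency and the input spinor frequency (this is exactly the null structure that makes \eqref{0.1} well-behaved), and again this angle is absorbed by the three modulation weights, after which one is left with a convolution estimate of the same type.

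Concretely, the steps I would carry out are: (i) record the algebraic identities $\Pi_\pm(\xi)\beta = \beta\Pi_\mp(\xi)$ and $\Pi_{\pm_1}(\xi_1)\Pi_{\pm_2}(\xi_2)^* = O(\theta)$, $\Pi_{\pm_2}(\xi)\cdot(\text{scalar})\cdot\Pi_{\pm_1}(\xi_1) = O(\theta')$, so that each nonlinearity is pointwise dominated in Fourier space by the product of the scalar symbols times an angle; (ii) invoke the standard null-form inequality $\theta(\pm_1\xi_1,\pm_2\xi_2) \lesssim \big(\tfrac{\langle\tau\pm|\xi|\rangle + \langle\tau_1\pm_1|\xi_1|\rangle + \langle\tau_2\pm_2|\xi_2|\rangle}{\min(|\xi_1|,|\xi_2|)}\big)^{1/2}$ (and its analogue with the relevant sign combination for \eqref{12}), splitting into the three cases according to which modulation dominates; (iii) in each case, distribute the gained weight $\min(|\xi_1|,|\xi_2|)^{-1/2}$ and the half-power of a modulation weight, reducing to a clean product estimate between the two $X^r$ spaces with shifted weights and shifted regularities; (iv) apply Proposition \ref{Prop.0.1} to convert this into the free-wave product estimates in $\hat L^p_t\hat L^q_x$, and verify those using the known Strichartz and bilinear estimates. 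The dyadic decomposition (low-high, high-low, high-high in the spatial frequencies) will be needed in step (iii)/(iv), with the hardest regime the high-high interaction with low output frequency, where the $\min(|\xi_1|,|\xi_2|)^{-1/2}$ gain is essential.

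The main obstacle I anticipate is the bookkeeping of exponents: the conditions $l \ge s \ge \tfrac{5}{8r}$, $\tfrac12 + \tfrac{3}{4r} < l \le 1 + \tfrac{1}{4r}$ with $b = \tfrac{1}{r}+$ and the loss $b-1+$ in the output modulation are tight, so after the null-form gain one must check that the resulting convolution estimate has exactly enough room — in particular the upper bound $l \le 1 + \tfrac{1}{4r}$ on $\phi$'s regularity in \eqref{12} is presumably forced by the high-modulation/high-output case where too much smoothness on $\phi$ cannot be absorbed, and the lower bound $s \ge \tfrac{5}{8r}$ comes from the high-high-to-low case in \eqref{11}. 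One must also be careful that the bilinear free-wave estimates from \cite{GT} and \cite{FK} are stated (or can be re-derived) in the Fourier–Lebesgue setting $r = 1+$ rather than only $r = 2$; since the transfer principle of Proposition \ref{Prop.0.1} only requires the $\hat L^p_t\hat L^q_x \leftarrow \hat H^{s_j,r}$ form, the route is to prove those free-wave estimates directly for $r = 1+$ by Hausdorff–Young and stationary phase, and this is where most of the genuine analytic work sits. Once $r = 1+$ is settled, the general range $1 < r \le 2$ follows by bilinear complex interpolation with the $r = 2$ results of \cite{AFS}, as announced.
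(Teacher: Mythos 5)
Your outline captures the right architecture (null structure via $\Pi_{\pm_2}(\eta-\xi)\beta\Pi_{\pm_1}(\eta)=O(\angle)$, transfer principle, reduction to bilinear convolution estimates, interpolation to general $r$), but it has a genuine gap: you propose to treat \emph{both} estimates by the modulation-weight angle bound (\ref{16}) and then ``verify the resulting product estimates,'' which is precisely where the content of the proposition lies, and for (\ref{11}) this is not the mechanism that actually closes at $s=\frac{5}{8r}$. The paper's proof of (\ref{11}) does not use (\ref{16}) at all: after a fractional Leibniz reduction to the asymmetric estimate (\ref{20}) (note the split $\frac{3}{8r}$ versus $\frac{5}{8r}$ between the two factors), it inserts the \emph{exact} angle formulas (\ref{14})--(\ref{15}), restricts to free waves $e^{\pm itD}\psi_0$, and explicitly evaluates the resulting $\delta$-integrals $\int\delta(\tau-|\eta|\pm|\xi-\eta|)\,|\eta|^{-a}|\eta-\xi|^{-c}\,d\eta$ via Foschi--Klainerman (Prop.~4.5, Lemmas~4.3 and 4.4), using that (\ref{14})--(\ref{15}) supply the factors $|\xi|^{1/2}$ and $||\tau|-|\xi||^{1/2}$ which cancel against the negative powers produced by those integrals. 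Trading the angle for modulation weights as in (\ref{16}) loses this structure, and you have not exhibited bilinear estimates that would close at the stated regularity along that route; you also invoke ``Fourier--Lebesgue Strichartz estimates,'' which are not used anywhere in the argument.

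For (\ref{12}) your plan is closer to the paper's (dualize to (\ref{12'}), apply (\ref{16}), split by dominant modulation), but two essential points are missing. First, the paper replaces the exponent $\frac12$ in (\ref{16}) by $\frac{1}{2r}$; this calibration is what makes the six resulting product estimates (\ref{1'})--(\ref{6'}) match the hypotheses of Prop.~\ref{Prop.1.4} and Prop.~\ref{Prop.2.3}. Second, the verification of those six estimates is nontrivial: (\ref{5'}) and (\ref{6'}) require Prop.~\ref{Prop.1.4} with carefully chosen parameters and, for (\ref{5'}), an interpolation between an $X^r_{0,1}$ bound and an $X^r_{0,0}$ bound --- and it is exactly (\ref{5'}) that forces $l>\half+\frac{3}{4r}$ (as the paper's remark notes), not a ``high-modulation/high-output'' obstruction on an upper bound for $l$ as you conjecture. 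Since your proposal defers all of this exponent bookkeeping, it does not yet constitute a proof.
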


By duality (\ref{12}) is equivalent to
$$ \int\int \langle \Pi_{\pm_2}(D)(\phi \beta \Pi_{\pm_1}(D) \psi),\psi' \rangle \,dt \,dx \lesssim \|\phi\|_{X^r_{l,b}} \|\psi\|_{X^r_{s,b,\pm_1}} \|\psi'\|_{X^r_{-s,1-b-,\pm_2}} \, . $$
The left hand side equals
$$ \int \int \phi \langle \beta \Pi_{\pm_1}(D)\psi,\Pi_{\pm_2}(D)\psi' \rangle \, dt \, dx \lesssim \|\phi\|_{X^r_{l,b}} \| \langle \beta \Pi_{\pm_1}(D)\psi,\Pi_{\pm_2}(D) \psi' \rangle \|_{X^{r'}_{-l,-b}} \, , $$
so that (\ref{12}) reduces to
\begin{equation}
\label{12'}
\| \langle \beta \Pi_{\pm_1}(D) \psi,\Pi_{\pm_2}(D) \psi' \rangle \|_{X^{r'}_{-l,-b}} \lesssim  \|\psi\|_{X^r_{s,b,\pm_1}} \|\psi'\|_{X^r_{-s,1-b-,\pm_2}}  \, . 
\end{equation} 

The null structure rests on the following property of the Fourier symbol which is given by the following lemma.
\begin{lemma}
\label{Lemma1} (cf. \cite{AFS}, Lemma 2)
$$\Pi_{\pm_2}(\eta-\xi) \beta \Pi_{\pm_1}(\eta) = \beta \Pi_{\mp_2}(\eta-\xi) \Pi_{\pm_1}(\eta) = O(\angle(\pm_1 \eta,\pm_2(\eta-\xi))) \, ,$$
where $\angle(\eta,\xi)$ denotes the angle between the vectors $\eta$ and $\xi$ .
\end{lemma}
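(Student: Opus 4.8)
The plan is to verify the two claimed identities by direct matrix computation using the anticommutation relations, and then to extract the angular decay from the explicit form of the projectors. First I would establish the identity $\Pi_{\pm_2}(\eta-\xi)\beta = \beta\Pi_{\mp_2}(\eta-\xi)$. This is immediate from the definition $\Pi_{\pm}(\zeta) = \frac{1}{2}(I \pm \frac{\zeta}{|\zeta|}\cdot\alpha)$ together with the relation $\alpha^j\beta = -\beta\alpha^j$ quoted in the introduction: conjugating $\frac{\zeta}{|\zeta|}\cdot\alpha$ by $\beta$ flips its sign, hence sends $\Pi_{\pm}$ to $\Pi_{\mp}$. Applying this with $\zeta = \eta-\xi$ gives $\Pi_{\pm_2}(\eta-\xi)\beta\Pi_{\pm_1}(\eta) = \beta\Pi_{\mp_2}(\eta-\xi)\Pi_{\pm_1}(\eta)$, which is the first equality.

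Next I would analyze the product of two projectors $\Pi_{\mp_2}(\eta-\xi)\Pi_{\pm_1}(\eta)$. Writing $e = \eta/|\eta|$ and $e' = (\eta-\xi)/|\eta-\xi|$, and using $(v\cdot\alpha)(w\cdot\alpha) = (v\cdot w)I + i(v\wedge w)\,\Sigma$ for suitable matrix $\Sigma$ (which follows from $\alpha^j\alpha^k + \alpha^k\alpha^j = 2\delta^{jk}I$; in two dimensions $v\wedge w$ is the scalar $v_1w_2 - v_2w_1$ and $\Sigma = -i\alpha^1\alpha^2$), one expands
$$
\Pi_{\mp_2}(\eta-\xi)\Pi_{\pm_1}(\eta) = \tfrac{1}{4}\bigl(I \mp_2 e'\cdot\alpha\bigr)\bigl(I \pm_1 e\cdot\alpha\bigr).
$$
The constant term is $\frac{1}{4}(1 \mp_1\mp_2\, e\cdot e')I$ plus terms linear in $\alpha$. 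The key point is the scalar coefficient $1 - (\pm_1 e)\cdot(\pm_2 e')$: when the two unit vectors $\pm_1 e$ and $\pm_2 e'$ are nearly parallel this coefficient is $1 - \cos\theta = O(\theta^2)$, where $\theta = \angle(\pm_1\eta,\pm_2(\eta-\xi))$, and similarly the linear-in-$\alpha$ terms carry a factor $e - e'$ or $\pm_1 e - \pm_2 e'$ whose norm is $O(\theta)$. Collecting, every entry of the matrix $\Pi_{\mp_2}(\eta-\xi)\Pi_{\pm_1}(\eta)$ — and hence of $\beta$ times it — is bounded by $C\,\theta$, which is the asserted $O(\angle(\pm_1\eta,\pm_2(\eta-\xi)))$ bound.

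The only mild obstacle is bookkeeping with the signs $\pm_1, \pm_2$ and making sure the angle that appears is the one between $\pm_1\eta$ and $\pm_2(\eta-\xi)$ rather than between $\eta$ and $\eta-\xi$; this is handled by noting that $\Pi_{\pm}(\zeta)$ depends on $\zeta$ only through $\pm\zeta/|\zeta|$, so one may absorb the signs into the unit vectors from the outset and work uniformly with $u := \pm_1 e$, $u' := \pm_2 e'$, for which $|u - u'| = 2\sin(\theta/2) \le \theta$ and $1 - u\cdot u' = \tfrac{1}{2}|u-u'|^2$. Since this is precisely Lemma 2 of \cite{AFS}, I would in practice just cite that reference for the detailed constants, indicating the computation above as the mechanism.
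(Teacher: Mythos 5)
Your proposal is correct. The paper itself gives no proof of this lemma --- it simply cites \cite{AFS}, Lemma 2 --- so your direct computation fills in what the paper delegates to a reference, and it checks out: the commutation $\Pi_{\pm_2}(\eta-\xi)\beta=\beta\Pi_{\mp_2}(\eta-\xi)$ follows from $\alpha^j\beta+\beta\alpha^j=0$ exactly as you say, and after absorbing the signs into unit vectors $u=\pm_1\eta/|\eta|$, $u'=\pm_2(\eta-\xi)/|\eta-\xi|$ the product of projectors has scalar part $\frac14(1-u\cdot u')=O(\theta^2)$, vector part $\frac14(u-u')\cdot\alpha=O(\theta)$, and a wedge part of size $|\sin\theta|=O(\theta)$, giving the stated bound. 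For the record, the cited reference gets the same conclusion slightly more quickly by writing $\Pi_{\mp_2}(\eta-\xi)\Pi_{\pm_1}(\eta)=\Pi_{\mp_2}(\eta-\xi)\bigl(\Pi_{\pm_1}(\eta)-\Pi_{\pm_1}(\zeta)\bigr)$ for a suitable $\zeta$ with $\Pi_{\mp_2}(\eta-\xi)\Pi_{\pm_1}(\zeta)=0$, the difference of projectors being $O(|u-u'|)=O(\theta)$ in operator norm; but your expansion is an equally valid mechanism.
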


This has the following consequence:
\begin{align}
\label{13}
&| {\mathcal F} (\langle \beta \Pi_{\pm_1}(D) \psi,\Pi_{\pm_2} \psi') \rangle (\tau,\xi)| \\ \nonumber
& \lesssim \int |\langle \beta \Pi_{\pm_1}(\eta) \tilde{\psi}(\lambda,\eta), \Pi_{\pm_2}(\eta-\xi) \tilde{\psi}' (\lambda-\tau,\eta-\xi) \rangle | \, d\lambda \, d\eta \\ \nonumber
&= \int |\langle  \Pi_{\pm_2}(\eta-\xi) \beta \Pi_{\pm_1}(\eta) \tilde{\psi}(\lambda,\eta), \tilde{\psi}' (\lambda-\tau,\eta-\xi) \rangle | \, d\lambda \, d\eta \\
\nonumber
& \lesssim \int \angle(\pm_1 \eta,\pm_2 (\eta - \xi)) |\tilde{\psi}(\lambda,\eta)| \, |\tilde{\psi}'(\lambda-\tau,\eta-\xi)| \, d\lambda \, d\eta \, .
\end{align}

For the angle between two vectors the following elementary estimates apply.
\begin{lemma}
\label{Lemma2}
(cf. \cite{AFS})
\begin{align}
\label{14}
\angle(\eta,\eta-\xi) & \sim \frac{|\xi|^{\half} (|\xi|-||\eta|-|\eta-\xi||)^{\half}}{|\eta|^{\half} |\eta-\xi|^{\half}} \, , \\
\label{15}
\angle(\eta,\xi-\eta) & \sim \frac{(|\eta|+|\xi-\eta|)^{\half} (|\eta|+|\eta-\xi|-|\xi|))^{\half}}{|\eta|^{\half} |\eta-\xi|^{\half}} \, , \\
\label{16}
\angle(\pm_1 \eta, \pm_2(\eta-\xi)) & \lesssim \left( \frac {\langle|\tau|-|\xi|\rangle + \langle \lambda \pm_1 |\eta| \rangle + \langle \lambda-\tau \pm_2|\eta-\xi|\rangle}{\min(\langle \xi \rangle,\langle\eta-\xi \rangle)} \right)^{\half} \, .
\end{align}
\end{lemma}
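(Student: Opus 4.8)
The plan is to prove \eqref{14} and \eqref{15} by elementary trigonometry, and then to deduce \eqref{16} from them together with an algebraic identity relating the three hyperbolic weights. First I would establish \eqref{14}: writing $\xi=\eta-(\eta-\xi)$ and applying the law of cosines gives
\[ 1-\cos\angle(\eta,\eta-\xi)=\frac{|\xi|^2-(|\eta|-|\eta-\xi|)^2}{2|\eta||\eta-\xi|}=\frac{\bigl(|\xi|-\big||\eta|-|\eta-\xi|\big|\bigr)\bigl(|\xi|+\big||\eta|-|\eta-\xi|\big|\bigr)}{2|\eta||\eta-\xi|}\,. \]
By the triangle inequality $\big||\eta|-|\eta-\xi|\big|\le|\xi|$, so the second factor in the numerator is $\sim|\xi|$; inserting $1-\cos\theta=2\sin^2(\theta/2)$ and using $\theta\sim\sin(\theta/2)$ for $\theta\in[0,\pi]$ yields \eqref{14}. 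For \eqref{15} I would do the same with $\xi=\eta+(\xi-\eta)$, which produces $1-\cos\angle(\eta,\xi-\eta)=\frac{(|\eta|+|\eta-\xi|)^2-|\xi|^2}{2|\eta||\eta-\xi|}$; here $|\xi|\le|\eta|+|\eta-\xi|$ shows that the sum $|\eta|+|\eta-\xi|+|\xi|$ is $\sim|\eta|+|\eta-\xi|$, and the half-angle identity finishes it.

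For \eqref{16} I would first reduce to $\pm_1=+$: the substitution $(\tau,\xi,\lambda,\eta)\mapsto-(\tau,\xi,\lambda,\eta)$ leaves the three weights $\langle|\tau|-|\xi|\rangle$, $\langle\lambda\pm_1|\eta|\rangle$, $\langle(\lambda-\tau)\pm_2|\eta-\xi|\rangle$, the denominator and the angle unchanged while flipping $\pm_1,\pm_2$ simultaneously. With $\pm_1=+$ there are two subcases. If $\pm_2=+$, then $\angle(\pm_1\eta,\pm_2(\eta-\xi))=\angle(\eta,\eta-\xi)$ and I set $h:=|\xi|-\big||\eta|-|\eta-\xi|\big|$; if $\pm_2=-$, then it equals $\angle(\eta,\xi-\eta)$ and I set $h:=|\eta|+|\eta-\xi|-|\xi|$; in both cases $h\ge0$. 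By \eqref{14}, \eqref{15} the square of the angle is $\sim\frac{h}{|\eta||\eta-\xi|}$ times $|\xi|$, resp. $|\eta|+|\eta-\xi|$, and since $|\xi|\le|\eta|+|\eta-\xi|\le2\max(|\eta|,|\eta-\xi|)$ this prefactor divided by $|\eta||\eta-\xi|$ is $\le\frac{2}{\min(|\eta|,|\eta-\xi|)}$. Hence $\angle(\pm_1\eta,\pm_2(\eta-\xi))^2\lesssim\frac{h}{\min(|\eta|,|\eta-\xi|)}$, and combined with the trivial bound $h\le2\min(|\eta|,|\eta-\xi|)$ (which also gives $\angle\lesssim1$) and with $\langle\cdot\rangle\ge1$ this delivers the right-hand side of \eqref{16} — with the minimum over the two interacting frequencies — as soon as $h$ is controlled by the sum of the three weights; the region $\min(|\eta|,|\eta-\xi|)\lesssim1$ is absorbed into $\angle\le\pi$.

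So the substantive step, and the one I expect to be the only real obstacle, is the bound $h\lesssim\langle|\tau|-|\xi|\rangle+\langle\lambda+|\eta|\rangle+\langle(\lambda-\tau)\pm_2|\eta-\xi|\rangle$. This rests on the identity $\bigl(\lambda+|\eta|\bigr)-\bigl((\lambda-\tau)\pm_2|\eta-\xi|\bigr)=\tau+|\eta|\mp_2|\eta-\xi|$ together with $|\tau|=\bigl(|\tau|-|\xi|\bigr)+|\xi|$. Writing $m_0=|\tau|-|\xi|$, $m_1=\lambda+|\eta|$, $m_2=(\lambda-\tau)\pm_2|\eta-\xi|$: when $\pm_2=+$ one gets $|\eta|-|\eta-\xi|=m_1-m_2-\tau$, hence $\big||\eta|-|\eta-\xi|\big|\ge|\tau|-|m_1|-|m_2|$ and therefore $h=|\xi|-\big||\eta|-|\eta-\xi|\big|\le\bigl(|\xi|-|\tau|\bigr)+|m_1|+|m_2|=-m_0+|m_1|+|m_2|$; when $\pm_2=-$ one gets $|\eta|+|\eta-\xi|=m_1-m_2-\tau$, and a short case split on $\operatorname{sgn}\tau$ (using $\tau+|\xi|=-m_0$ for $\tau\le0$, and $0\le\tau\le m_1-m_2$ for $\tau>0$) again gives $h\le|m_0|+|m_1|+|m_2|$. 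Feeding this back into the previous paragraph yields \eqref{16}. The whole statement is of course already in \cite{AFS}; the only point needing care is matching up the three $\pm$-signs in this last identity so that exactly the modulations occurring in \eqref{16} are produced, together with the routine passage between $|\cdot|$ and $\langle\cdot\rangle$ at frequencies $\lesssim1$.
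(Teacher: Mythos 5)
Your proof of \eqref{14} and \eqref{15} (law of cosines plus the half-angle identity) and of \eqref{16} (the modulation identity $(\lambda\pm_1|\eta|)-((\lambda-\tau)\pm_2|\eta-\xi|)=\tau\pm_1|\eta|\mp_2|\eta-\xi|$ combined with \eqref{14}--\eqref{15}) is correct, and since the paper gives no proof of this lemma at all --- it is simply quoted from \cite{AFS} --- there is nothing internal to compare it against; what you wrote is the standard argument behind the cited result, with all sign cases checked properly.

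One point that you only half-flag deserves to be made explicit. What your argument actually establishes is \eqref{16} with denominator $\min(\langle\eta\rangle,\langle\eta-\xi\rangle)$, i.e.\ the two \emph{input} frequencies, whereas the printed denominator is $\min(\langle\xi\rangle,\langle\eta-\xi\rangle)$. The printed version is in fact false: take $\pm_1=\pm_2=+$, $\eta=(0,\varepsilon)$, $\xi=(-R,0)$ with $\varepsilon\ll 1\ll R$, and $\lambda=-\varepsilon$, $\tau=\lambda+|\eta-\xi|$; then all three modulation weights are $O(1)$, the printed right-hand side is $O(R^{-\half})$, yet $\angle(\eta,\eta-\xi)\approx\pi/2$. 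The obstruction is exactly the one visible in your computation: $|\xi|/(|\eta||\eta-\xi|)$ is bounded by $2/\min(|\eta|,|\eta-\xi|)$ but not by a constant times $1/\min(|\xi|,|\eta-\xi|)$ when $|\eta|$ is small. The version with $\min(\langle\eta\rangle,\langle\eta-\xi\rangle)$ is also the one the paper actually uses in the proof of \eqref{12'}: the gain $\frac{1}{2r}$ harvested from the denominator is always placed on the regularity of $\psi$ (frequency $\eta$) or of $\psi'$ (frequency $\eta-\xi$), never on the output frequency $\xi$. So \eqref{16} as printed contains a typo ($\xi$ should be $\eta$), and your proof establishes the correct statement that the paper needs; no gap on your side.
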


\begin{proof}[Proof of (\ref{11})] 
By the fractional Leibniz rule the estimate (\ref{11}) follows from 
\begin{equation}
\label{20}
\| \langle \beta \Pi_{\pm_1}(D) \psi , \Pi_{\pm_2}(D) \psi' \rangle \|_{X^r_{0,b-1+}}  \lesssim \|\psi\|_{X^r_{\frac{3}{8r},b,\pm_1}} \|\psi'\|_{X^r_{\frac{5}{8r},b,\pm_2}}
\end{equation}
and the similar estimate
$$
\| \langle \beta \Pi_{\pm_1}(D) \psi , \Pi_{\pm_2}(D) \psi' \rangle \|_{X^r_{0,b-1+}}  \lesssim \|\psi\|_{X^r_{\frac{5}{8r},b,\pm_1}} \|\psi'\|_{X^r_{\frac{3}{8r},b,\mp_2}} \, .$$
We only prove the first one, because the last one is handled in exactly the same way. It is equivalent to
\begin{equation}
\label{17}
\| \langle \beta \Pi_{\pm_1}(D) \psi , \Pi_{\pm_2}(D) \overline{\psi'} \rangle \|_{X^r_{0,b-1+}}  \lesssim \|\psi\|_{X^r_{\frac{3}{8r},b,\pm_1}} \|\psi'\|_{X^r_{\frac{5}{8r},b,\mp_2}} \, .
\end{equation}
The left hand side is bounded by
\begin{align}
\label{18}
&\| {\mathcal F}(\langle \beta \Pi_{\pm_1} \psi,\Pi_{\pm_2} \overline{\psi'} \rangle)\|_{L^{r'}_{\tau \xi}} \\
\nonumber
& = \| \int \langle \beta \Pi_{\pm_1} (\eta) \tilde{\psi}(\lambda,\eta), \Pi_{\pm_2}(\eta-\xi) \tilde{\psi'}(\tau - \lambda,\xi-\eta) \rangle \, d\lambda \, d\eta \|_{L^{r'}_{\tau \xi}} \, .
\end{align}

Let now $\psi(t,x) = e^{\pm_1 itD} \psi_0^{\pm_1}(x) $ and $\psi' = e^{\mp_2 itD} \psi_0'^{\mp_2}(x)$ , so that we obtain
$\tilde{\psi}(\tau,\xi) = c\delta(\tau \mp_1 |\xi|) \widehat{\psi_0^{\pm_1}}(\xi)$ and $\tilde{\psi}'(\tau,\xi) = c\delta(\tau \pm_2 |\xi|) \widehat{\psi_0'^{\mp_2}}(\xi)$ . Then we obtain by Lemma \ref{Lemma1}:
\begin{align}
\nonumber
&\| {\mathcal F}(\langle \beta \Pi_{\pm_1} \psi,\Pi_{\pm_2} \overline{\psi'} \rangle)\|_{L^{r'}_{\tau \xi}} \\
\nonumber
& = c^2 \| \int \langle \Pi_{\pm_2}(\eta-\xi) \beta \Pi_{\pm_1}(\eta) \delta (\lambda\mp_1 |\eta|) \widehat{\psi_0^{\pm_1}}(\eta),\delta(\tau-\lambda \pm_2 |\xi-\eta|) \\
\nonumber 
&\hspace{25em} \widehat{\psi_0'^{\mp_2}}(\xi-\eta) \rangle \, d\eta \, d\lambda \|_{L^{r'}_{\tau\xi}} \\
\label{19}
& \lesssim \| \int \angle(\pm_1 \eta,\pm_2(\eta-\xi)) \delta(\tau \mp_1 |\eta| \pm_2 |\xi-\eta|) |\widehat{\psi_0^{\pm_1}}(\eta)| \, |\widehat{\psi_0'^{\mp_2}}(\xi-\eta)| \, d\eta\|_{L^{r'}_{\tau \xi}} \, .
\end{align}

We now distinguish between the different signs. It suffices to consider the cases
$ \pm_1 = \pm_2 = + $ (hyperbolic case) and $\pm_1 = +$ , $\pm_2 = -$ (elliptic case).\\
{\bf Case $ \pm_1 = \pm_2 = + $.}  Then we obtain from (\ref{14}) and H\"older's inequality:
\begin{align*}
(\ref{19}) & \lesssim \| \int \frac{|\xi|^{\half}(|\xi|-|\tau|)^{\half}}{|\eta|^{\half} |\eta-\xi|^{\half}} \,\delta(\tau-|\eta|+|\xi-\eta|) \,|\widehat{\psi_0^+}(\eta)| \, |\widehat{\psi_0'^-}(\xi-\eta)| \,d\eta \|_{L^{r'}_{\tau \xi}} \\
& \lesssim \sup_{\tau,\xi} I \,\,\|\widehat{D^{\frac{3}{8r}} \psi_0^+}\|_{L^{r'}} \|\widehat{D^{\frac{5}{8r}} \psi_0'^-}\|_{L^{r'}} \, ,
\end{align*}
where
$$ I = |\xi|^{\half} ||\tau|-|\xi||^{\half} (\int \delta(\tau-|\eta|+|\xi-\eta|) |\eta|^{-\frac{3}{8}-\frac{r}{2}} |\eta-\xi|^{-\frac{5}{8}-\frac{r}{2}} \, d\eta)^{\frac{1}{r}} \, . $$
We want to show $\sup_{\tau,\xi} I \lesssim 1$ . \\
Subcase $|\eta|+|\xi-\eta| \le 2|\xi|$ . By \cite{FK}, Prop. 4.5 we obtain
$$\int_{|\eta|+|\xi-\eta| \le 2|\xi|} \delta(\tau-|\eta|+|\xi-\eta|) |\eta|^{-\frac{3}{8}-\frac{r}{2}} |\eta-\xi|^{-\frac{5}{8}-\frac{r}{2}} \, d\eta \sim |\xi|^A ||\tau|-|\xi||^B \, , $$
with $A= \max(\frac{5}{8}+\frac{r}{2},\frac{3}{2}) -1-r = \half - r$ and $B= 1- \max(\frac{5}{8}+\frac{r}{2},\frac{3}{2}) = - \half$ for $r = 1+$ . This implies
$$I^r \lesssim |\xi|^{\frac{r}{2}} ||\tau|-|\xi||^{\frac{r}{2}} |\xi|^{\half-r} ||\tau |-|\xi||^{-\half} = ||\tau|-|\xi||^{\frac{r}{2}-\half} |\xi|^{\half-\frac{r}{2}} \lesssim 1 \, , $$
because $|\tau| \le |\xi|$ . \\[0.2em]
Subcase $|\eta|+|\xi-\eta| \ge 2|\xi|$ . We apply \cite{FK}, Lemma 4.4, and obtain
\begin{align*}
&\int_{|\eta|+|\xi-\eta| \ge 2|\xi|} \delta(\tau-|\eta|+|\xi-\eta|) |\eta|^{-\frac{3}{8}-\frac{r}{2}} |\eta-\xi|^{-\frac{5}{8}-\frac{r}{2}} \, d\eta \\
& \sim (|\xi|^2-\tau^2)^{-\half} \int_2^{\infty} (|\xi|x+\tau)^{-\frac{r}{2}+\frac{3}{8}} (|\xi|x-\tau)^{-\frac{r}{2}+\frac{5}{8}} (x^2-1)^{-\half} \, dx \\
& \sim (|\xi|^2-\tau^2)^{-\half} \int_2^{\infty} (x+\frac{\tau}{|\xi|})^{-\frac{r}{2}+\frac{3}{8}} (x-\frac{\tau}{|\xi|})^{-\frac{r}{2}+\frac{5}{8}} (x^2-1)^{-\half} \, dx \, |\xi|^{1-r} \, .
\end{align*}
The lower limit of the integral can be chosen as $2$ by inspection of the proof of \cite{FK}.
Because $|\tau| \le |\xi|$ the integral is bounded and we obtain
$$ I^r \lesssim |\xi|^{\frac{r}{2}} ||\tau|-|\xi||^{\frac{r}{2}} \frac{|\xi|^{1-r}}{||\tau|-|\xi||^{\half} ||\tau|+|\xi||^{\half}}  \lesssim ||\tau|-|\xi||^{\frac{r}{2}-\half} |\xi|^{\half-\frac{r}{2}} \lesssim 1 \, . $$
{\bf Case $\pm_1=+ \, , \, \pm_2 = -$} . We use (\ref{15}) and H\"older and obtain in the case $|\eta| \ge |\xi-\eta|$ :
\begin{align*}
(\ref{19}) & \lesssim  \| \int \frac{||\tau|-|\xi||^{\half}}{ |\eta-\xi|^{\half}} \,\delta(\tau-|\eta|-|\xi-\eta|) \,|\widehat{\psi_0^+}(\eta)| \, |\widehat{\psi_0'^+}(\xi-\eta)| \,d\eta \|_{L^{r'}_{\tau \xi}} \\
& \lesssim \sup_{\tau,\xi} I \,\,\|\widehat{D^{\frac{3}{8r}} \psi_0^+}\|_{L^{r'}} \|\widehat{D^{\frac{5}{8r}} \psi_0'^+}\|_{L^{r'}} \, ,
\end{align*}
where
$$ I = ||\tau|-|\xi||^{\half} (\int \delta(\tau-|\eta|-|\xi-\eta|) |\eta|^{-\frac{3}{8}} |\eta-\xi|^{-\frac{5}{8}-\frac{r}{2}} \, d\eta)^{\frac{1}{r}} \, . $$
 By \cite{FK}, Lemma 4.3 we obtain
$$\int \delta(\tau-|\eta|-|\xi-\eta|) |\eta|^{-\frac{3}{8}} |\eta-\xi|^{-\frac{5}{8}-\frac{r}{2}} \, d\eta \sim \tau^A ||\tau|-|\xi||^B \, , $$
with $A= \max(\frac{5}{8}+\frac{r}{2},\frac{3}{2}) -(\frac{r}{2}+1) = \half - \frac{r}{2}$ and $B= 1- \max(\frac{5}{8}+\frac{r}{2},\frac{3}{2}) = - \half$ for $r = 1+$ . Using $|\xi| \le \tau$ this implies
$$I^r \lesssim ||\tau|-|\xi||^{\frac{r}{2}} \tau^{\half-\frac{r}{2}} ||\tau |-|\xi||^{-\half}   \lesssim 1 \, . $$
We omit the case $|\eta| \le |\xi-\eta|$ , because it can be treated similarly.

In any case we arrive at the estimate
$$ \| {\mathcal F}(\langle \beta \Pi_{\pm_1} \psi,\Pi_{\pm_2} \overline{\psi'} \rangle)\|_{L^{r'}_{\tau \xi}} \lesssim \|\widehat{D^{\frac{3}{8r}} \psi_0^{\pm_1}}\|_{L^{r'}} \|\widehat{D^{\frac{5}{8r}} \psi_0'^{\mp_2}}\|_{L^{r'}} \, .$$
By the transfer principle Prop. \ref{Prop.0.1} we obtain (\ref{20}), which completes the proof.
\end{proof}

For the proof of (\ref{12'}) we need the following propositions, where we refer to the authors's paper \cite{P} and the Grigoryan-Tanguay paper \cite{GT}.

\begin{prop}
\label{Prop.1.4}
Assume $1 < r \le 2$ ,  $\alpha_0 > \frac{1}{r}-\gamma$ , $\alpha_1 + \alpha_2 > \frac{2}{r}$ , $0 \le \alpha_0 \le \alpha_1,\alpha_2$, $\max(\alpha_1,\alpha_2) \neq \frac{3}{2r}$ , $b \ge \gamma$ ,  and either $\alpha_1+\alpha_2-\alpha_0 > \gamma + \frac{1}{r}$ and $ \gamma \ge \frac{1}{2r}$ , or $\alpha_1+\alpha_2-\alpha_0 \ge \gamma + \frac{1}{r}$ and $ \gamma > \frac{1}{2r}$ . Moreover $ \gamma \ge \max(\alpha_1-\frac{1}{r},\alpha_2-\frac{1}{r})$ , $b > \frac{1}{r}$ .
Then the following estimate holds:
$$ \|uv\|_{X^r_{\alpha_0,\gamma}} \lesssim \|u\|_{X^r_{\alpha_1,b}} \|v\|_{X^r_{\alpha_2,b}} \, . $$
\end{prop}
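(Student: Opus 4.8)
The plan is to argue as for bilinear estimates in Bourgain--Klainerman--Machedon spaces, transported to the Fourier--Lebesgue scale. First I would pass to the dual formulation: since the dual of $X^r_{\alpha_0,\gamma}$ is $X^{r'}_{-\alpha_0,-\gamma}$, the asserted estimate is equivalent to the trilinear bound
$$ \Bigl| \int\!\!\int u\,v\,\overline{w}\,dx\,dt \Bigr| \lesssim \|u\|_{X^r_{\alpha_1,b}} \|v\|_{X^r_{\alpha_2,b}} \|w\|_{X^{r'}_{-\alpha_0,-\gamma}} \, . $$
Splitting each of $u,v,w$ into two pieces with space-time Fourier support in $\{\pm\tau \ge 0\}$ reduces the spaces $X^r_{\cdot,b}$ to the half-wave spaces $X^r_{\cdot,b,\pm}$, and on the Fourier side, with $\xi_0=\xi_1+\xi_2$ and $\tau_0=\tau_1+\tau_2$, one then has for each sign configuration the dispersive inequality
$$ \max\bigl( \langle \tau_0 \pm_0 |\xi_0| \rangle , \langle \tau_1 \pm_1 |\xi_1| \rangle , \langle \tau_2 \pm_2 |\xi_2| \rangle \bigr) \gtrsim h_{\pm_0\pm_1\pm_2}(\xi_0,\xi_1,\xi_2) \, , $$
where $h$ is a nonnegative resonance weight which, according to which frequency is largest and to the $\pm$-pattern, is comparable to $\max_j|\xi_j|$ in the transversal cases and degenerates in the parallel and elliptic cases (where it is governed by a quantity of the order of $|\xi_1|+|\xi_2|-|\xi_1+\xi_2|$).

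Next I would carry out a dyadic decomposition, $|\xi_j| \sim N_j$ and $\langle \tau_j \pm_j |\xi_j| \rangle \sim L_j$ for $j=0,1,2$, and distinguish the transversal region, where $h \sim \max_j N_j$, from the resonant region, where $h$ is much smaller. In the transversal region one places the modulation gain on whichever of $L_0,L_1,L_2$ realizes the maximum, uses H\"older in the time-frequency variable together with the one-dimensional convolution bounds of Foschi--Klainerman (\cite{FK}, Lemmas 4.3--4.5 and Prop. 4.5, as already used in the proof of (\ref{11}) and in \cite{P}) to evaluate the $\tau$-integration, and reduces the remaining spatial factor, via the transfer principle Prop. \ref{Prop.0.1}, to a Fourier--Lebesgue product estimate for the free wave propagator; the latter is furnished by the two-dimensional bilinear estimate of Grigoryan--Tanguay \cite{GT}. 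The strict inequalities in the hypotheses ($\alpha_1+\alpha_2 > \tfrac{2}{r}$, $\alpha_0 > \tfrac{1}{r}-\gamma$, $b > \tfrac{1}{r}$, $\gamma \ge \max(\alpha_1-\tfrac{1}{r},\alpha_2-\tfrac{1}{r})$) provide exactly the room needed to sum the resulting geometric series over $N_0,N_1,N_2$ and $L_0,L_1,L_2$.

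The main obstacle is the resonant region, where $h$ is small, the modulation gain is weak, and nothing can be wasted in the summation. There one has to exploit the precise form of the Foschi--Klainerman convolution estimates (the integrals carrying the $(x^2-1)^{-1/2}$ weight, evaluated with sharp exponents), and it is precisely in this analysis that the dichotomy ``$\alpha_1+\alpha_2-\alpha_0 > \gamma+\tfrac{1}{r}$ with $\gamma\ge\tfrac{1}{2r}$, or $\alpha_1+\alpha_2-\alpha_0 \ge \gamma+\tfrac{1}{r}$ with $\gamma>\tfrac{1}{2r}$'' and the exclusion $\max(\alpha_1,\alpha_2)\neq\tfrac{3}{2r}$ are consumed: the threshold $\tfrac{1}{2r}$ controls how much of $\gamma$ can be traded off against the degeneracy of $h$, and $\max(\alpha_1,\alpha_2)=\tfrac{3}{2r}$ is the borderline at which the relevant convolution bound produces a logarithm and the estimate just fails. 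I expect that organizing the subcases of this region — high-high-to-low interactions with small output frequency, and nearly parallel interactions — uniformly in $r\in(1,2]$ will be the most delicate part of the bookkeeping. As an alternative to running the full case analysis for every $r$, one may instead establish the estimate only at the endpoint $r=1+$ by the above argument, invoke the known $r=2$ bilinear estimates of d'Ancona--Foschi--Selberg \cite{AFS}, and then recover the whole range $1<r<2$ by bilinear complex interpolation, using the identity $(X^{r_0}_{s_0,b_0},X^{r_1}_{s_1,b_1})_{[\theta]} = X^r_{s,b}$ recorded at the beginning of Section 2, since the hypotheses of the proposition are stable under this interpolation.
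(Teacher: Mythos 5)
The paper does not actually prove this proposition: its ``proof'' consists of the single line ``\cite{P}, Proposition 2.6'', so the entire content is outsourced to the author's earlier paper on the Chern--Simons systems in Fourier--Lebesgue spaces. Your outline assembles the right toolbox for such a proof --- duality, splitting into half-wave pieces, the resonance inequality $\max_j \langle \tau_j\pm_j|\xi_j|\rangle \gtrsim \bigl|\pm_0|\xi_0|\mp_1|\xi_1|\mp_2|\xi_2|\bigr|$, the Foschi--Klainerman convolution integrals, the transfer principle (Prop.~\ref{Prop.0.1}), and dyadic summation --- and this is indeed the circle of ideas behind \cite{P} and \cite{G}. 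But as written it is a plan, not a proof: the step in which each hypothesis ($\alpha_0>\frac{1}{r}-\gamma$, the dichotomy between $\alpha_1+\alpha_2-\alpha_0>\gamma+\frac{1}{r}$ with $\gamma\ge\frac{1}{2r}$ versus $\ge$ with $\gamma>\frac{1}{2r}$, the exclusion $\max(\alpha_1,\alpha_2)\neq\frac{3}{2r}$, and $\gamma\ge\max(\alpha_1-\frac{1}{r},\alpha_2-\frac{1}{r})$) is actually consumed is precisely the resonant and low-output-frequency analysis that you explicitly defer. Without carrying out that case analysis one cannot verify that the stated parameter region is correct, and that is the entire content of the proposition.

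The fallback you propose is also not viable as stated. Bilinear interpolation between $r=1+$ and $r=2$ would require, for each admissible tuple $(\alpha_0,\alpha_1,\alpha_2,\gamma,b,r)$, endpoint tuples at $r_0=1+$ and $r_1=2$ that are themselves admissible and interpolate to the given one; the hypothesis set is not convex (the constraint $\max(\alpha_1,\alpha_2)\neq\frac{3}{2r}$ removes a hypersurface, and the two branches of the dichotomy do not mix under convex combination), and your sketch does not produce such endpoint tuples. More importantly, the $r=2$ input you invoke --- the d'Ancona--Foschi--Selberg estimates, i.e.\ Prop.~\ref{Prop.2.2'} --- concerns the null-form estimates (\ref{11}) and (\ref{12}) for the specific DKG nonlinearities, not the plain product estimate $\|uv\|_{X^2_{\alpha_0,\gamma}}\lesssim\|u\|_{X^2_{\alpha_1,b}}\|v\|_{X^2_{\alpha_2,b}}$ asserted here; the two statements have different shapes and different parameter ranges, so the former cannot serve as the $r=2$ endpoint for the latter. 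A citable $r=2$ endpoint would have to come from the classical product estimates for wave-$X^{s,b}$ spaces (in the spirit of \cite{S}), and even then the endpoint $r=1+$ would still have to be handled in full by the direct argument.
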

\begin{proof}
\cite{P}, Proposition 2.6.
\end{proof}

In the case $\gamma =0$ we need the following non-trivial result.
\begin{prop}
\label{Prop.2.3}
Let $ 1 \le r \le 2$ , $\alpha_1,\alpha_2 \ge 0$ , $\alpha_1 + \alpha_2 > \frac{3}{2r}$ , $b_1+b_2 > \frac{3}{2r}$ and $b_1,b_2 > \frac{1}{2r}$ . Then the following estimate holds
$$ \|uv\|_{X^r_{0,0}} \lesssim \|u\|_{X^r_{\alpha_1,b_1}} \|v\|_{X^r_{\alpha_2,b_2}} \, . $$
\end{prop}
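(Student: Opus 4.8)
The estimate $\|uv\|_{X^r_{0,0}} \lesssim \|u\|_{X^r_{\alpha_1,b_1}}\|v\|_{X^r_{\alpha_2,b_2}}$ is, after Fourier transform, a convolution estimate in $L^{r'}_{\tau\xi}$. Writing $f(\tau,\xi) = \langle\xi\rangle^{\alpha_1}\langle|\tau|-|\xi|\rangle^{b_1}|\tilde u(\tau,\xi)|$ and $g(\lambda,\eta) = \langle\eta\rangle^{\alpha_2}\langle|\lambda|-|\eta|\rangle^{b_2}|\tilde v(\lambda,\eta)|$, the claim is equivalent to
$$ \Big\| \int \frac{f(\lambda,\eta)\,g(\tau-\lambda,\xi-\eta)}{\langle\eta\rangle^{\alpha_1}\langle\xi-\eta\rangle^{\alpha_2}\langle|\lambda|-|\eta|\rangle^{b_1}\langle|\tau-\lambda|-|\xi-\eta|\rangle^{b_2}}\,d\lambda\,d\eta \Big\|_{L^{r'}_{\tau\xi}} \lesssim \|f\|_{L^{r'}}\|g\|_{L^{r'}} \, . $$
The natural route is bilinear interpolation between the endpoint $r=2$ and the endpoint $r=1$, exploiting the Banach-space structure and the complex interpolation identity $(X^{r_0}_{s_0,b_0},X^{r_1}_{s_1,b_1})_{[\theta]} = X^r_{s,b}$ recalled at the start of Section 2. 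First I would establish the two endpoint cases: for $r=2$ the estimate is the classical product estimate in $X^{0,0}=L^2_{t,x}$, which holds for $\alpha_1+\alpha_2>\frac34$, $b_1+b_2>\frac34$, $b_1,b_2>\frac14$ — this is exactly the bilinear $L^2$ estimate used by Grigoryan-Tanguay \cite{GT} (and is sharp up to endpoints by the standard dyadic/Knapp examples). For $r=1$ the estimate must be handled directly: here $r'=\infty$, and $L^\infty_{\tau\xi}$ of a convolution is an $L^1\times L^1 \to L^\infty$ type bound, i.e.\ one needs $\sup_{\tau,\xi}\int (\ldots)\,d\lambda\,d\eta \lesssim \|f\|_{L^\infty}\|g\|_{L^\infty}$, which reduces to showing the multiplier weight $\langle\eta\rangle^{-\alpha_1}\langle\xi-\eta\rangle^{-\alpha_2}\langle|\lambda|-|\eta|\rangle^{-b_1}\langle|\tau-\lambda|-|\xi-\eta|\rangle^{-b_2}$ is integrable in $(\lambda,\eta)$ uniformly in $(\tau,\xi)$; this holds precisely under $\alpha_1+\alpha_2>\frac32$ together with $b_1+b_2>\frac32$ and $b_1,b_2>\frac12$ (integrate first in $\lambda$, picking up $\langle|\tau|-|\xi|+\ldots\rangle^{-(b_1+b_2-1)}$ type gains, then in $\eta$). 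Bilinear complex interpolation between $(\alpha_1,\alpha_2,b_1,b_2)$ at $r=1$ and at $r=2$ then yields the stated range for all $1\le r\le 2$, with the strict inequalities $\alpha_1+\alpha_2>\frac{3}{2r}$, $b_1+b_2>\frac{3}{2r}$, $b_i>\frac{1}{2r}$ matching the interpolated thresholds.

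An alternative, more self-contained route avoids interpolation and runs a direct dyadic decomposition: decompose $u,v$ into pieces supported on dyadic blocks $|\xi|\sim N$, $||\tau|-|\xi||\sim L$, and similarly for $v$ with $M$, $J$. One splits into the region where the output modulation is comparable to the largest of $N,M$ (the ``resonant'' region, controlled by a null-form type gain) and the region where it is not. In each block one uses the Fourier-Lebesgue analogues of the bilinear estimates — the $\widehat{L}^p_t\widehat{L}^q_x$ Strichartz-type estimates for free solutions combined with the transfer principle (Prop.\ \ref{Prop.0.1}) — to get a bound with a small power of $N/M$ or $M/N$ and of $L,J$ against the output modulation, then sums the geometric series. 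The thresholds $\frac{3}{2r}$ are exactly what makes these sums converge.

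The main obstacle is the endpoint $r=1$ analysis and, relatedly, making the bilinear interpolation rigorous: bilinear complex interpolation of the Bennett-Bez or Stein type requires the operator to be well-defined and bounded on a dense common subspace with analytic dependence on the interpolation parameter, and here the exponents $\alpha_i$ and $b_i$ all vary simultaneously with $r$, so one must set up an analytic family of bilinear operators $T_z$ (inserting $\langle\xi\rangle^{z}$-type analytic weights) and verify the required growth bounds on the boundary lines $\mathrm{Re}\,z=0,1$. The $r=1$ endpoint itself, while ``just'' a uniform integrability computation, is delicate because the convolution measure concentrates near the light cones and one must carefully track how the two modulation weights cooperate: integrating in the time variable $\lambda$ first converts $\langle|\lambda|-|\eta|\rangle^{-b_1}\langle|\tau-\lambda|-|\xi-\eta|\rangle^{-b_2}$ into a single weight with exponent $b_1+b_2-1>\frac12$ (using $b_1,b_2>\frac12$), after which the remaining $\eta$-integral $\int \langle\eta\rangle^{-\alpha_1}\langle\xi-\eta\rangle^{-\alpha_2}(\ldots)\,d\eta$ converges in $\mathbb R^2$ exactly when $\alpha_1+\alpha_2>\frac32$ — but one has to check that no logarithmic divergence arises at the boundary of the region where the substitution degenerates, which is where the hypothesis $\max(\alpha_1,\alpha_2)\neq \frac{3}{2r}$ present in the neighbouring Prop.\ \ref{Prop.1.4} is (here) unnecessary because of the strict inequality $\alpha_1+\alpha_2>\frac{3}{2r}$.
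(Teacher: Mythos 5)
Your plan is correct in its quantitative skeleton, but it takes a different (and more self-contained) route than the paper, whose entire proof of Proposition \ref{Prop.2.3} is a citation: the case $r=2$ is attributed to Selberg \cite{S}, the case $1<r\le 2$ to Grigoryan--Tanguay \cite{GT}, Prop.~3.1, with the remark that $r=1$ is also admissible and that one only needs to sum their dyadic-block estimates. Your second route (dyadic decomposition plus Fourier--Lebesgue bilinear estimates and the transfer principle of Prop.~\ref{Prop.0.1}) is essentially a reconstruction of the argument in \cite{GT}. Your first route -- proving the $r=1$ endpoint directly as a uniform $L^1_{\lambda,\eta}$ bound on the weight $\langle\eta\rangle^{-\alpha_1}\langle\xi-\eta\rangle^{-\alpha_2}\langle|\lambda|-|\eta|\rangle^{-b_1}\langle|\tau-\lambda|-|\xi-\eta|\rangle^{-b_2}$ and then interpolating with $r=2$ -- is genuinely different and has the advantage of needing the classical $L^2$ result only as a black box; the thresholds $\tfrac32,\tfrac32,\tfrac12$ at $r=1$ and $\tfrac34,\tfrac34,\tfrac14$ at $r=2$ do interpolate affinely in $1/r$ to the stated conditions, and since the bilinear map (pointwise multiplication) is fixed while only the spaces vary, the standard multilinear complex interpolation theorem together with the identity $(X^{r_0}_{s_0,b_0},X^{r_1}_{s_1,b_1})_{[\theta]}=X^r_{s,b}$ recalled in Section~2 suffices -- your concern about constructing an analytic family $T_z$ is unnecessary. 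What your proposal leaves undone is precisely the one piece the paper also does not write out, namely the $r=1$ computation with all sign combinations of the modulations (the hyperbolic interactions, where the level sets of $|\eta|-|\xi-\eta|$ are hyperbolas, are the delicate ones); this is routine in the Ginibre--Tsutsumi--Velo/Selberg style but would need to be carried out for the argument to be complete.
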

\begin{proof}
Selberg \cite{S} proved this in the case $r=2$ . The general case $1 < r \le 2$ was given by Grigoryan-Tanguay \cite{GT}, Prop. 3.1, but in fact the case $r=1$ is also admissible. More precisely the result follows from \cite{GT} after summation over dyadic pieces in a standard way.
\end{proof}

\begin{proof}[Proof of (\ref{12'})]
We apply Lemma \ref{Lemma1} and estimate the angle by (\ref{16}), where we replace  the power $\half$ by  $\frac{1}{2r}$ , which is certainly possible. This allows to reduce (\ref{12'}) by the following estimates:
\begin{align*}
\|u \overline{v}\|_{X^{r'}_{-l,-b+\frac{1}{2r}}} & \lesssim \|u\|_{X^r_{s,b,\pm_1}} \|v\|_{X^{r'}_{-s+\frac{1}{2r},1-b-,\pm_2}} \, , \\
\|u \overline{v}\|_{X^{r'}_{-l,-b+\frac{1}{2r}}} & \lesssim \|u\|_{X^r_{s+\frac{1}{2r},b,\pm_1}} \|v\|_{X^{r'}_{-s,1-b-,\pm_2}} \, , \\
\|u \overline{v}\|_{X^{r'}_{-l,-b}} & \lesssim \|u\|_{X^r_{s,b-\frac{1}{2r},\pm_1}} \|v\|_{X^{r'}_{-s+\frac{1}{2r},1-b-,\pm_2}} \, , \\
\|u \overline{v}\|_{X^{r'}_{-l,-b}} & \lesssim \|u\|_{X^r_{s+\frac{1}{2r},b-\frac{1}{2r},\pm_1}} \|v\|_{X^{r'}_{-s,1-b-,\pm_2}} \, , \\
\|u \overline{v}\|_{X^{r'}_{-l,-b}} & \lesssim \|u\|_{X^r_{s,b,\pm_1}} \|v\|_{X^{r'}_{-s+\frac{1}{2r},1-b-\frac{1}{2r}-,\pm_2}} \, , \\
\|u \overline{v}\|_{X^{r'}_{-l,-b}} & \lesssim \|u\|_{X^r_{s+\frac{1}{2r},b,\pm_1}} \|v\|_{X^{r'}_{-s,1-b-\frac{1}{2r}-,\pm_2}} \, .
\end{align*}
By duality it suffices to prove
\begin{align}
\label{1'}
\|uw\|_{X^r_{s-\frac {1}{2r},b-1+}} & \lesssim \|u\|_{X^r_{s,b}} \|w\|_{X^r_{l,b-\frac{1}{2r}}} \, ,\\
\label{2'}
\|uw\|_{X^r_{s,b-1+}} & \lesssim \|u\|_{X^r_{s+\frac{1}{2r},b}} \|w\|_{X^r_{l,b-\frac{1}{2r}}} \, , \\
\label{3'}
\|uw\|_{X^r_{s-\frac {1}{2r},b-1+}} & \lesssim \|u\|_{X^r_{s,b-\frac{1}{2r}}} \|w\|_{X^r_{l,b}} \,, \\
\label{4'}
\|uw\|_{X^r_{s,b-1+}} & \lesssim \|u\|_{X^r_{s+\frac{1}{2r},b-\frac{1}{2r}}} \|w\|_{X^r_{l,b}} \, ,\\
\label{5'}
\|uw\|_{X^r_{s-\frac {1}{2r},b-1+\frac{1}{2r}+}} & \lesssim \|u\|_{X^r_{s,b}} \|w\|_{X^r_{l,b}} \, ,\\
\label{6'}
\|uw\|_{X^r_{s,b-1+\frac{1}{2r}+}} & \lesssim \|u\|_{X^r_{s+\frac{1}{2r},b}} \|w\|_{X^r_{l,b}} \, .
\end{align}
(\ref{1'}) follows from the fractional Leibniz rule and Prop. \ref{Prop.2.3} , which is fulfilled for $l+ \frac{1}{2r} > \frac{3}{2r} \, \Leftrightarrow \, l > \frac{1}{r}$ and $ 2b-\frac{1}{2r} > \frac{3}{2r} \, \Leftrightarrow b > \frac{1}{r}$ . (\ref{2'}),(\ref{3'}) and (\ref{4'}) follow similarly.

 Next we prove (\ref{6'}). We use Prop. \ref{Prop.1.4} with parameters $\gamma = b-1+\frac{1}{2r}+ = \frac{3}{2r}-1+$, $\alpha_0 = s > \frac{5}{8r} > \frac{1}{r}-\gamma$ , $\alpha_1=s+\frac{1}{2r}$ , $\alpha_2=l$ , so that $\alpha_1 + \alpha_2 - \alpha_0 = l+\frac{1}{2r} >\gamma + \frac{1}{r} = \frac{5}{2r}-1+$ , because by assumption $l > \frac{2}{r}-1$ . Moreover $\alpha_1+\alpha_2 = s + \frac{1}{2r}+l > \frac{2}{r}$ , because by assumption $ s > \frac{5}{8r}$ and $ l > \half + \frac{3}{4r}$ . We also need $ \gamma = \frac{3}{2r}-1+ \ge \max(\alpha_1-\frac{1}{r},\alpha_2-\frac{1}{r}) = \max(s-\frac{1}{2r},l-\frac{1}{r}) $,  because we may assume without loss of generality $ l \le \frac{5}{2r}-1$ and $ s \le \frac{2}{r}-1$ .

Finally we have to prove (\ref{5'}), where it suffices to consider the case $l=\half+\frac{3}{4r}+$. By the fractional Leibniz rule we reduce to the estimates
\begin{align}
\label{5a}
\|uw\|_{X^r_{0,b-1+\frac{1}{2r}+}} & \lesssim \|u\|_{X^r_{\frac{1}{2r},b}} \|w\|_{X^r_{\half+\frac{3}{4r}+,b} }\, , \\
\label{5b}
\|uw\|_{X^r_{0,b-1+\frac{1}{2r}+}} & \lesssim \|u\|_{X^{s,b}} \|w\|_{X^r_{\half+\frac{3}{4r}-s+\frac{1}{2r}+,b}} \, .
\end{align}
Concerning (\ref{5a}) we apply Prop. \ref{Prop.1.4} with $\gamma = 1$ , $\alpha_0 =0$ , $\alpha_1= \frac{1}{2r} $ , $\alpha_2 = 1+\frac{1}{2r}+$, so that $\alpha_1+\alpha_2 = 1+\frac{1}{r}+  > \frac{2}{r}$ and $\alpha_1+\alpha_2-\alpha_0 = 1+\frac{1}{r}+> \gamma + \frac{1}{r}$ . Thus
$$ \|uw\|_{X^r_{0,1}} \lesssim \|u\|_{X^r_{\frac{1}{2r},b}} \|w\|_{X^r_{1+\frac{1}{2r}+,b}} \, . $$
Moreover we apply Prop. \ref{Prop.2.3}  with $\alpha_1=\frac{1}{2r}$ , $\alpha_2=\frac{1}{r}+$ , $b_1=b_2=b$ , thus $\alpha_1+\alpha_2 >\frac{3}{2r}$ and $b_1+b_2 >\frac{3}{2r}$ .Thus
$$ \|uw\|_{X^r_{0,0}} \lesssim \|u\|_{X^r_{\frac{1}{2r},b}} \|w\|_{X^r_{\frac{1}{r}+,b}} \, . $$
Interpolation between these estimates implies
$$\|uw\|_{X^r_{0,\half}} \lesssim \|u\|_{X^r_{\frac{1}{2r},b}} \|w\|_{X^r_{\half+\frac{3}{4r}+,b}} \, ,$$
which proves (\ref{5a}). 

Concerning (\ref{5b}) we argue similarly. We obtain
$$ \|uw\|_{X^r_{0,1}} \lesssim \|u\|_{X^r_{s,b}} \|w\|_{X^r_{1+\frac{1}{r}-s,b}}   $$
and
$$ \|uw\|_{X^r_{0,0}} \lesssim \|u\|_{X^r_{s,b}} \|w\|_{X^r_{\frac{3}{2r}-s+,b}}  \, , $$
so that interpolation implies
$$ \|uw\|_{X^r_{0,\half}} \lesssim \|u\|_{X^r_{s,b}} \|w\|_{X^r_{\half+\frac{5}{4r}-s+,b}} \,,  $$
which proves (\ref{5b}) and completes the proof of (\ref{12'}).
\end{proof}
{\bf Remark:} It is (\ref{5'}) which prevents the optimal choice $s=\half+$ , $l=1+$ in the case $r=1+$ . All the other estimates which are necessary for the proof of our main theorem are valid for this choice.\\[0.2em]

The bilinear estimates in the case $r=2$ by \cite{AFS}, Theorem 1 are given by the following proposition.
\begin{prop}
\label{Prop.2.2'}
Let $r=2$. The estimates (\ref{11}) and (\ref{12}) are fulfilled in the region
$$ s > - \frac{1}{5} \quad, \quad \max(\frac{1}{4}-\frac{s}{2},\frac{1}{4}+\frac{s}{2},s) < l < \min(\frac{3}{4}+2s,\frac{3}{4}+\frac{3s}{2},1+s) \, . $$
\end{prop}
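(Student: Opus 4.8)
The plan is to obtain Proposition \ref{Prop.2.2'} directly from Theorem 1 of \cite{AFS}, so that the only real work is to set up a dictionary between the two formulations. Specializing $r=2$, our spaces $X^2_{s,b,\pm}$ and $X^2_{s,b}$ coincide with the half-wave spaces and the wave-Sobolev spaces $H^{s,b}$ used there, the choice $b=\frac{1}{r}+=\frac{1}{2}+$ turns the output weight $b-1+$ into $-\frac{1}{2}+$, and the null-form identity in our Lemma \ref{Lemma1} is exactly \cite{AFS}, Lemma 2. So I would first record these identifications and then check that the bilinear estimates established in \cite{AFS} in order to run the Picard iteration for the projected DKG system \eqref{*}, \eqref{**} are, after translation, precisely \eqref{11} and \eqref{12}.

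Concretely, I would match the two nonlinearities term by term: \eqref{11} is the estimate for the Klein--Gordon source $\langle\beta\psi,\psi\rangle$ in $X^2_{l-1,-\frac{1}{2}+}$ after inserting the decomposition $\psi=\psi_++\psi_-$, and \eqref{12} --- which, by the duality computation already carried out in the text, is equivalent to \eqref{12'} --- is the estimate for the Dirac source $\Pi_{\pm_2}(\phi\beta\Pi_{\pm_1}\psi)$. In \cite{AFS} both are proven on exactly the region $s>-\frac{1}{5}$ and $\max(\frac{1}{4}-\frac{s}{2},\frac{1}{4}+\frac{s}{2},s)<l<\min(\frac{3}{4}+2s,\frac{3}{4}+\frac{3s}{2},1+s)$, which is what is claimed here, so nothing beyond the translation is needed.

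The hard part, to the extent that there is one, is purely bookkeeping at the endpoints. The estimates in \cite{AFS} are phrased with a fixed small loss $\varepsilon>0$ and a fixed $b=\frac{1}{2}+\varepsilon$, whereas we have written $b>\frac{1}{r}=\frac{1}{2}$ arbitrary and output exponent $b-1+$, with the $+$ standing for an arbitrarily small loss; I would note that $X^2_{s,b'}$ is monotone in $b'$ and $X^2_{s,b}$ in $b$, so the version of \cite{AFS} valid for all sufficiently small $\varepsilon$ immediately yields our formulation, and that the passage from dyadic Littlewood--Paley pieces to the full estimate is standard. Granting these routine identifications, Proposition \ref{Prop.2.2'} follows.
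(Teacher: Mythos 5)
Your proposal is correct and matches the paper's treatment: Proposition \ref{Prop.2.2'} is simply a restatement of \cite{AFS}, Theorem 1, and the paper offers no proof beyond that citation, exactly as you propose. Your additional remarks on identifying the spaces, the exponent conventions, and the small losses are routine and consistent with what the paper takes for granted.
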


The admissible pairs $(s,l)$ in the general case $1 < r \le 2$ are now obtained by bilinear interpolation between the estimates in Prop. \ref{Prop.2.2} and Prop. \ref{Prop.2.2'}. Because we are mainly interested in the minimal possible choice of $s$ and $l$ we concentrate on the following result for simplicity.

\begin{prop}
\label{Prop.2.2''}
Let $ 1 < r \le 2 $ , $ b = \frac{1}{r}+$ and $\delta > 0$ . The estimates (\ref{11}) and (\ref{12}) are fulfilled in the cases $(s,l) = (\frac{33}{20r} - \frac{41}{40}+\delta , \frac{9}{5r} - \frac{11}{20}+\delta)$ (minimal $s$) and $(s,l)= (\frac{5}{4r}-\frac{5}{8}+\delta,\frac{2}{r}-\frac{3}{4}+\delta)$ (minimal $l$).
\end{prop}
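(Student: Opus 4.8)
The strategy is pure bilinear complex interpolation between the two endpoint results already in hand: Proposition \ref{Prop.2.2}, which gives estimates (\ref{11}) and (\ref{12}) for $r=1+$ in the stated $(s,l)$-range (in particular near the corner $(s,l)=(\tfrac58+,\tfrac54+)$), and Proposition \ref{Prop.2.2'}, which gives (\ref{11}) and (\ref{12}) for $r=2$ throughout the d'Ancona–Foschi–Selberg region. Recall from the beginning of Section 2 that the complex interpolation space of the solution spaces is again of the same type, namely $(X^{r_0}_{s_0,b_0,\pm},X^{r_1}_{s_1,b_1,\pm})_{[\theta]}=X^r_{s,b,\pm}$ with $s=(1-\theta)s_0+\theta s_1$, $\tfrac1r=\tfrac{1-\theta}{r_0}+\tfrac{\theta}{r_1}$, $b=(1-\theta)b_0+\theta b_1$, and the same for $X^r_{s,b}$. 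Since (\ref{11}) and (\ref{12}) are bilinear estimates between such spaces (with the loss parameter $b-1+$ on the output, which also interpolates linearly), the bilinear Stein interpolation theorem applies directly once the two endpoints are fixed.

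The concrete steps are as follows. First, fix $\theta\in(0,1)$ by the requirement $\tfrac1r=(1-\theta)\cdot 1+\theta\cdot\tfrac12=1-\tfrac{\theta}{2}$, i.e. $\theta=2-\tfrac2r$, so that $\theta$ runs over $(0,1)$ as $r$ runs over $(1,2)$. Then $b=(1-\theta)\cdot 1+\theta\cdot\tfrac12=\tfrac1r$ (with the usual $+$); note $b=\tfrac1r+$ is exactly the value in the statement, consistent with taking $b_0=1+$, $b_1=\tfrac12+$ at the endpoints. Next, choose the endpoint regularities. For the "minimal $s$" pair: at $r=1+$ take $(s_0,l_0)=(\tfrac58+,\tfrac54+)$, and at $r=2$ take some admissible $(s_1,l_1)$ from Proposition \ref{Prop.2.2'}; then solve $s=(1-\theta)s_0+\theta s_1$ and $l=(1-\theta)l_0+\theta l_1$ with $\theta=2-\tfrac2r$ and check that the resulting $(s,l)$ equals $(\tfrac{33}{20r}-\tfrac{41}{40},\tfrac{9}{5r}-\tfrac{11}{20})$ for an appropriate choice of $(s_1,l_1)$; comparing the coefficient of $\tfrac1r$ forces $s_0=\tfrac58$, $s_1=-\tfrac{1}{5}$ (indeed $(1-\theta)\tfrac58+\theta(-\tfrac15)$ expanded in $\tfrac1r$ via $\theta=2-\tfrac2r$, $1-\theta=\tfrac2r-1$, gives $\tfrac{33}{20r}-\tfrac{41}{40}$), and similarly $l_0=\tfrac54$, $l_1=\tfrac{7}{20}$ yields $\tfrac{9}{5r}-\tfrac{11}{20}$; these are precisely the endpoint pairs flagged in Remark 2, and $(-\tfrac15+,\tfrac7{20}+)$ lies in the region of Proposition \ref{Prop.2.2'}. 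For the "minimal $l$" pair: at $r=1+$ again take $(\tfrac58+,\tfrac54+)$, and at $r=2$ take $(s_1,l_1)=(0,\tfrac14+)$ (also admissible by Proposition \ref{Prop.2.2'}); then $l=(\tfrac2r-1)\tfrac54+(2-\tfrac2r)\tfrac14=\tfrac2r-\tfrac34$ and $s=(\tfrac2r-1)\tfrac58+(2-\tfrac2r)\cdot 0=\tfrac{5}{4r}-\tfrac58$, matching the claim. The $\delta>0$ in the statement absorbs the accumulated $+$'s from $b$, from the endpoint regularities, and from the $b-1+$ losses, so one works at strictly interior points and then perturbs.

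Finally, verify the interpolation is legitimate: the operators in (\ref{11}) and (\ref{12}) are fixed bilinear operators (built from the matrices $\beta,\Pi_\pm$ and pointwise multiplication, hence independent of $r$), so one is literally interpolating the single operator norm between two pairs of function spaces of the form $X^r_{s,b,\pm}\times X^r_{s,b,\pm}\to X^r_{\sigma,\beta}$; bilinear complex interpolation (the bilinear version of Stein's theorem, as already invoked implicitly in \cite{G},\cite{GN}) then gives the intermediate estimate with the interpolated exponents. There is essentially no obstacle here — the only thing to be careful about is bookkeeping: that the output-space indices $l-1$ in (\ref{11}) and $s$ in (\ref{12}), and the derivative-loss exponents $b-1+$, interpolate consistently with the input indices (they do, since everything is affine in $\theta$), and that one does not need the full strength of the $r=2$ region but only the two explicit corner points $(-\tfrac15+,\tfrac7{20}+)$ and $(0,\tfrac14+)$. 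Hence I would state Proposition \ref{Prop.2.2''} as an immediate corollary of Propositions \ref{Prop.2.2}, \ref{Prop.2.2'} and bilinear interpolation, recording only the two linear-algebra identities $\theta=2-\tfrac2r$, $(1-\theta)s_0+\theta s_1=$ the claimed value, and omitting the routine details exactly as Remark 3 already signals.
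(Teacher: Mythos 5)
Your proposal is correct and follows essentially the same route as the paper: bilinear interpolation of the fixed multiplication operators between the $r=1+$ endpoint $(\tfrac58+,\tfrac54+)$ of Proposition \ref{Prop.2.2} and the $r=2$ pairs $(-\tfrac15+,\tfrac{7}{20})$ and $(0,\tfrac14+)$ of Proposition \ref{Prop.2.2'}, with the same linear algebra in $\theta=2-\tfrac2r$. The only point the paper spells out that you compress into ``perturbs'' is how a \emph{fixed} $\delta>0$ is handled when $r\to1$: one writes $\delta=\tfrac54-\tfrac{5}{4r}+\omega$ and uses the fractional Leibniz rule to extend the $r=1+$ estimates from the corner $(\tfrac58+,\tfrac12+\tfrac{3}{4r}+)$ to all $\omega>0$ before interpolating.
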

\begin{proof}
We interpolate between the pair $(s,l)=(\frac{5}{8}+,\frac{5}{4}+)$ in the case $r=1+$ on the one hand and the pairs $(s,l) = (-\frac{1}{5}+,\frac{7}{20})$ and  $(s,l)=(0,\frac{1}{4}+)$ in the case $r=2$ on the other hand to obtain the first and second claimed pair $(s,l)$, respectively. We concentrate on the second pair . Let $\delta > 0$ be given and $s= \frac{5}{4r}-\frac{5}{8}+\delta$, $l=\frac{2}{r}-\frac{3}{4}+\delta$ . If $r > 1$ is sufficiently close to 1 we have $\delta > \frac{5}{4}-\frac{5}{4r}$ , so that $\delta = \frac{5}{4}-\frac{5}{4r} + \omega$ , where $\omega > 0$ . For $\omega = 0+$ we obtain $s= \frac{5}{8}+$ and $l= \half+\frac{3}{4r}+$ . In this case the estimates (\ref{11}) and (\ref{12}) are satisfied. By  the fractional Leibniz rule this is also true for every $\omega > 0$ , thus for the given $\delta$ and r close enough to 1. Bilinear interpolation with the case $s= \delta$ and $l=\frac{1}{4}+\delta$ in the case $r=2$ implies the estimates (\ref{11}) and (\ref{12}) for the given pair $(s,l)$ in the whole range $1 < r \le 2$.
\end{proof}

\end{document}